\def\cP{\mathcal P}
\def\cV{\mathcal V}
\def\a{\mathfrak{a}}
\def\b{\mathfrak{b}}
\def\cc{\mathfrak{c}}
\def\m{\mathfrak{m}}
\def\p{\mathfrak{p}}
\def\q{\mathfrak{q}}
\def\L{\mathbf{L}}
\newcommand{\Z}{\mathbb{Z}}
\def\RHom{\operatorname{{\mathbf R}Hom}}
\def\LLambda{\operatorname{{\mathbf L}\Lambda}}
\def\ker{\operatorname{ker}}
\def\pd{\operatorname{pd}}
\def\invlim{\varprojlim}
\def\del{\partial}
\def\Spec{\operatorname{Spec}}
\def\supp{\operatorname{supp}}
\def\cosupp{\operatorname{cosupp}}
\def\Hom{\operatorname{Hom}}
\def\Ext{\operatorname{Ext}}
\def\cone{\operatorname{cone}}
\def\K{\operatorname{K}}
\def\H{\operatorname{H}}
\def\D{\operatorname{D}}
\newcommand{\PurInj}{\mathcal{PI}}
\numberwithin{equation}{section}
\newcounter{intro}
\theoremstyle{plain} 
\newtheorem{thm}[equation]{Theorem}
\newtheorem{question}[equation]{Question}
\newtheorem{cor}[equation]{Corollary}
\newtheorem{lem}[equation]{Lemma}
\newtheorem{prop}[equation]{Proposition}
\newtheorem{thmintro}[intro]{Theorem}
\newtheorem{propintro}[intro]{Proposition}
\theoremstyle{definition}
\newtheorem{example}[equation]{Example}
\theoremstyle{remark}
\newtheorem{rem}[equation]{Remark}
\title[Cosupport computations for finitely generated modules]{Cosupport computations for finitely generated modules over commutative noetherian rings} 
\author{Peder Thompson}
\address{Department of Mathematics and Statistics\\ Texas Tech University\\ Broadway and Boston\\ Lubbock, TX 79409}
\email{peder.thompson@ttu.edu}
\date{\today}                                           
\subjclass[2010]{13D02, 13D07, 13D09, 13C11, 13E05}
\thanks{{\em Key words and phrases:}
cosupport, minimal complex, cotorsion flat module, countable ring}
\begin{document}
\begin{abstract}
We show that the cosupport of a commutative noetherian ring is precisely the set of primes appearing in a minimal pure-injective resolution of the ring. As an application of this, we prove that every countable commutative noetherian ring has full cosupport. We also settle the comparison of cosupport and support of finitely generated modules over any commutative noetherian ring of finite Krull dimension. Finally, we give an example showing that the cosupport of a finitely generated module need not be a closed subset of $\Spec R$, providing a negative answer to a question of Sather-Wagstaff and Wicklein \cite{SWW17}.
\end{abstract}
\maketitle


\section*{Introduction}
The theory of cosupport, recently developed by Benson, Iyengar, and Krause \cite{BIK12} in the context of triangulated categories, was partially motivated by work of Neeman \cite{Nee11}, who classified the colocalizing subcategories of the derived category of a commutative noetherian ring. Despite the many ways in which cosupport is dual to the more established notion of support introduced by Foxby \cite{Fox79,BIK08}, cosupport seems to be more elusive, even in the setting of a commutative noetherian ring. Indeed, the supply of finitely generated modules for which cosupport computations exist is limited. One purpose of this paper is to provide such computations. 

We first show that for a finitely generated module over a commutative noetherian ring of finite Krull dimension, its cosupport is the intersection of its support and the cosupport of the ring, which places emphasis on computing the cosupport of the ring itself. With this in mind, we prove that countable commutative noetherian rings have full cosupport, and hence cosupport and support coincide for finitely generated modules over such rings having finite Krull dimension.  We also give new examples of uncountable rings that have full cosupport.  Finally, we present an example of a ring whose cosupport is not closed, unlike support, yielding a negative answer to a question posed by Sather-Wagstaff and Wicklein \cite{SWW17}. 

One method to determine the support of a module is to identify primes appearing in its minimal injective resolution, as done by Foxby \cite{Fox79}, using the decomposition of injective modules described by Matlis \cite{Mat58}. Our systematic approach to computing cosupport is to appeal to the parallel decomposition of cotorsion flat modules due to Enochs \cite{Eno84} and use minimal cotorsion flat resolutions studied in \cite{Tho17a}.

\begin{center}* \quad * \quad *\end{center}

Our goal is to better understand cosupport in the setting of a commutative noetherian ring.  Over such a ring $R$, the cosupport of a complex $M$ is denoted $\cosupp_RM$.  This is the set of prime ideals $\p$ such that $\RHom_R(R_\p,\LLambda^{\p}M)$ is not acyclic, where $\LLambda^\p(-)$ is left derived $\p$-adic completion; see Section \ref{cosupp_section} for details.  Prompted by the fact that if $M$ is a finitely generated $\Z$-module, then there is an equality $\cosupp_\Z M=\supp_\Z M$ \cite[Proposition 4.18]{BIK12}, we investigate to what extent cosupport and support agree for finitely generated modules. The cosupport of finitely generated modules over a 1-dimensional domain having a dualizing complex is known \cite[Theorem 6.11]{SWW17}; this is recovered by part (2) of the following. Part (3) gives an affirmative answer to a question in \cite{SWW17}.

\begin{thmintro}[cf. Theorem \ref{full_cosupp_rings}, Corollary \ref{cosupp_equal_supp}]\label{countable_rings_thm_intro}
Let $R$ be one of the following:
\begin{enumerate}
\item A countable commutative noetherian ring;
\item A finite ring extension of a 1-dimensional commutative noetherian domain that is not complete local;
\item A finite ring extension of $k[x,y]_{(x,y)}$ for any field $k$.
\end{enumerate}
Then $R$ has full cosupport, i.e., $\cosupp_R R=\Spec R$. 

If $R$ is one of these rings and has finite Krull dimension, and $M$ is an $R$-complex with degreewise finitely generated cohomology, then $\cosupp_RM=\supp_RM$.
\end{thmintro}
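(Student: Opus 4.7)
The plan is to prove the first assertion---full cosupport $\cosupp_R R = \Spec R$---for each of the three listed classes, and then to deduce the second assertion from it. Once $R$ has full cosupport, the equality $\cosupp_R M = \supp_R M$ for $M$ with degreewise finitely generated cohomology follows from the intersection formula $\cosupp_R M = \supp_R M \cap \cosupp_R R$ (indicated in the introduction for noetherian rings of finite Krull dimension, and intended to be extended or matched by a direct argument in the countable case): intersecting with the full spectrum yields $\supp_R M$. Hence the bulk of the work is to verify full cosupport.

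For the full cosupport statement, I would invoke the paper's main characterization: $\cosupp_R R$ equals the set of primes appearing in a minimal pure-injective, equivalently cotorsion flat, resolution of $R$. By Enochs' decomposition, every cotorsion flat $R$-module splits as a product $\prod_{\p \in \Spec R} T_\p$ with $T_\p$ the $\p$-adic completion of a free $R_\p$-module; hence full cosupport reduces to exhibiting, for each $\p$, a nonzero $\p$-component in some term of the minimal cotorsion flat resolution---equivalently, a nonvanishing cotorsion flat Bass invariant at every prime. For cases (2) and (3), I would reduce via a finite-ring-extension argument (so that cosupport is preserved appropriately from the base) and then verify full cosupport for the base ring by explicit computation: for a $1$-dimensional noetherian domain that is not complete local, the non-completeness handles the generic point while direct analysis handles the maximal ideals; for $k[x,y]$, one leverages the explicit structure of its minimal cotorsion flat resolution together with non-completeness.

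Case (1), the countable ring case, is the main obstacle, and I expect it to be handled by a cardinality argument. The prototype is the classical contrast $\Ext^1_\Z(\Q, \Z) \neq 0$ versus $\Ext^1_{\hat\Z_p}(\Q_p, \hat\Z_p) = 0$: completeness kills a $\lim^1$-type obstruction, while in the uncompleted, countable case the same obstruction must be nonzero by cardinality considerations. The plan is to generalize this to arbitrary countable noetherian $R$ and every $\p \in \Spec R$: express the relevant cotorsion flat Bass invariant as a $\lim^1$ or $\Ext^1$ involving $R_\p$ and $\hat R^\p$, and use countability of $R$ (and hence of $R_\p$) against the continuum cardinality of the completion $\hat R^\p$ to force nonvanishing. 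The delicate point will be making this uniform in $\p$ and in the choice of countable noetherian ring, since different primes interact differently with completion; the key input is precisely that $R$ is small enough that no prime can be ``swallowed'' by completion-like cancellations.
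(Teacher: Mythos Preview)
Your overall architecture matches the paper's: establish full cosupport case by case, transfer along finite maps, then apply the intersection formula $\cosupp_R M=\supp_R M\cap\cosupp_R R$. Two points deserve comment.

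For case~(1), your instinct to use a cardinality argument is correct, but the specific formulation---countable $R_\p$ versus uncountable $\widehat{R}^\p$---is not quite the right contrast (for $\p=0$ in a domain one has $\widehat{R}^0=R$, which is still countable). The paper sidesteps your ``uniform in $\p$'' worry by first passing along the finite surjection $R\to R/\p$, reducing to the single assertion $0\in\cosupp_{R/\p}(R/\p)$ for the countable domain $R/\p$. There one uses the short exact sequence
\[
0\to R\to \varprojlim_{s\in R\setminus\{0\}} R/sR\to \Ext^1_R(R_{(0)},R)\to 0,
\]
and the point is that the inverse limit in the middle is uncountable while $R$ is countable, forcing $\Ext^1_R(R_{(0)},R)\ne0$. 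This is the clean cardinality step; your plan would eventually find it, but the reduction to a domain is what makes it work uniformly.

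For case~(3) there is a genuine gap. Appealing to ``the explicit structure of the minimal cotorsion flat resolution together with non-completeness'' is circular for the generic point: knowing that $(0)$ appears in the minimal resolution is \emph{equivalent} to $0\in\cosupp_{k[x,y]}k[x,y]$, so one needs an independent input. The paper uses Gruson's theorem that $\Ext^2_{k[x,y]}(k(x,y),k[x,y])\ne0$ when $k$ is uncountable (the countable case falling under~(1)), and then handles nonzero primes by observing that $k[x,y]/\p$ is either a field or a ring as in~(2). Non-completeness of $k[x,y]$ at nonzero ideals does not by itself give $0\in\cosupp$; Example~5.6 in the paper exhibits a domain ($k[\![t]\!][x]$) that is not complete at any nonzero ideal yet has $0\notin\cosupp$. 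You should identify Gruson's nonvanishing result as the essential ingredient here.
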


An obstruction to having full cosupport is completeness at a non-zero ideal. In particular, if $(R,\m)$ is a complete local ring, then $\cosupp_RR=\{\m\}$. Setting $\cc_R$ to be the largest ideal of $R$ such that $R$ is $\cc_R$-complete, the inclusion 
\begin{align*}
\cosupp_RR \subseteq \cV(\cc_R) \tag{$\star$}
\end{align*}
always holds \cite[Proposition 4.19]{BIK12} (here, $\cV(\cc_R)=\{\p\in \Spec R\mid \p\supseteq \cc_R\}$). 
We give a condition for equality to hold in ($\star$): 
\begin{propintro}[cf. Proposition \ref{cosupp_complete_closed_thm}]  
Let $R$ be a commutative noetherian ring and $\cc_R$ be as above. Then $\cosupp_RR=\cV(\cc_R)$ if and only if $R/\cc_R$ has full cosupport.
\end{propintro}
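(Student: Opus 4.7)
The plan is to convert equality in $(\star)$ into a statement about $R/\cc_R$ using the canonical homeomorphism
\[
\phi\colon \cV(\cc_R)\xrightarrow{\ \cong\ }\Spec(R/\cc_R),\qquad \p\mapsto \overline{\p}:=\p/\cc_R.
\]
Since $\cosupp_RR\subseteq\cV(\cc_R)$ by $(\star)$, the equality $\cosupp_RR=\cV(\cc_R)$ is equivalent to $\phi(\cosupp_RR)=\Spec(R/\cc_R)$. Thus it suffices to prove the general identification
\[
\phi(\cosupp_RR)=\cosupp_{R/\cc_R}(R/\cc_R),
\]
from which the biconditional in the statement is immediate: both sides fill $\Spec(R/\cc_R)$ precisely when $R/\cc_R$ has full cosupport.

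To establish this identification I would use the main input of the paper: for a commutative noetherian ring $S$, $\cosupp_SS$ coincides with the set of primes appearing in a minimal cotorsion flat (equivalently pure-injective) resolution of $S$ over $S$, detected by Enochs' product decomposition $\prod_\q T_\q^{(X_\q)}$. Let $R\to C^\bullet$ be such a minimal resolution of $R$; by $(\star)$ each $C^i=\prod_{\p\in\cV(\cc_R)}T_\p^{(X_{\p,i})}$. I then apply $-\otimes_R R/\cc_R$: since each $C^i$ is $R$-flat and $R\to C^\bullet$ is a quasi-isomorphism, the resulting complex $D^\bullet$ is a flat resolution of $R/\cc_R$ over $R/\cc_R$. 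The key computation is that, for $\p\in\cV(\cc_R)$, the Enochs block $T_\p$ (a completion of a free $R_\p$-module in the $\p R_\p$-adic topology) satisfies $T_\p\otimes_R R/\cc_R\cong T_{\overline{\p}}$, where $T_{\overline{\p}}$ is the analogous block for $R/\cc_R$; this uses that for a noetherian ring the relevant completion commutes with quotient by the finitely generated ideal $\cc_R$. Because quotient by an ideal also commutes with arbitrary products, one obtains $D^i\cong\prod_{\overline{\p}\in\Spec(R/\cc_R)}T_{\overline{\p}}^{(X_{\p,i})}$, an Enochs decomposition over $R/\cc_R$ carrying exactly the cardinalities $X_{\p,i}$ transported via $\phi$.

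The main obstacle I anticipate is verifying that $D^\bullet$ is itself \emph{minimal} as a cotorsion flat resolution of $R/\cc_R$: only then does the paper's characterization identify $\cosupp_{R/\cc_R}(R/\cc_R)$ with those $\overline{\p}$ whose Enochs cardinality in $D^\bullet$ is nonzero, i.e.\ with $\phi(\cosupp_RR)$. For this I would invoke the minimality criterion for cotorsion flat resolutions from \cite{Tho17a}, which is formulated blockwise on the Enochs decomposition (the differentials land in a distinguished submodule of each $T_\q^{(X_\q)}$); this property is stable under the blockwise quotient by $\cc_R$, since tensoring with $R/\cc_R$ acts within each block and introduces no new summands. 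Granted minimality of $D^\bullet$, the map $\phi$ restricts to the claimed bijection $\cosupp_RR\leftrightarrow\cosupp_{R/\cc_R}(R/\cc_R)$, and the proposition follows as outlined.
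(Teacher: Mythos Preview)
Your plan is correct and follows essentially the same route as the paper: the identification $\phi(\cosupp_RR)=\cosupp_{R/\cc_R}(R/\cc_R)$ is packaged there as a special case of Lemma~\ref{finite_map_cosupp}, which establishes $\cosupp_SS=(f^*)^{-1}(\cosupp_RR)$ for any finite ring map $f\colon R\to S$ via exactly the base-change-of-resolutions argument you outline, after which the proposition is immediate. For the minimality obstacle you flag, the paper bypasses a direct check of the blockwise criterion of Theorem~\ref{CF_minimal_thm} by citing Enochs \cite[Theorem 4.2]{Eno87}: when $S$ is module-finite over $R$, the functor $-\otimes_RS$ preserves pure-injective envelopes, so the tensored complex is automatically the minimal pure-injective resolution of $S$ with no further verification needed.
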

For example, if $R$ is a ring such that Theorem \ref{countable_rings_thm_intro} applies to $R/\cc_R$, then equality of ($\star$) holds. 
However, Example \ref{cosupp_not_closed} shows that strict inequality in ($\star$) can occur, providing a negative answer to a question in \cite{SWW17}; moreover, this example shows that $\cosupp_RR$ need not be a closed subset of $\Spec R$, unlike the support of a finitely generated module.

Two of our main results towards establishing these goals are Theorem \ref{cosupp_cf} and Theorem \ref{finite_map_cosupp}, which involve the notion of cotorsion flat modules (recalled in Section \ref{cosupp_detected_by_primes_subsection}).  In Theorem \ref{cosupp_cf}, we describe how the cosupport of a module can be computed by identifying the prime ideals appearing in a certain minimal complex of cotorsion flat modules. Further, in Theorem \ref{finite_map_cosupp} we show how cosupport passes along finite ring maps, by explicitly examining the structure of a minimal cotorsion flat resolution of the ring.

\begin{center}* \quad * \quad *\end{center}

An outline of the paper: We set notation and define cosupport for our setting in Section \ref{cosupp_section}, and in Section \ref{cosupp_detected_by_primes_subsection} we show how cosupport can be detected by minimal complexes of cotorsion flat modules (Theorem \ref{cosupp_cf}). In Section \ref{section_cotorsion} we compute cosupport of cotorsion modules. In Section \ref{cosupp_of_fg_modules}, we compare cosupport and support of finitely generated modules (Corollary \ref{cosupp_vs_supp}), prove a result describing how cosupport passes along finite ring maps (Theorem \ref{finite_map_cosupp}), and establish Theorem \ref{countable_rings_thm_intro} from above.  Finally, in Section \ref{examples_section} we give a number of explicit examples of cosupport of commutative noetherian rings, including an example that exhibits a ring without closed cosupport (Example \ref{cosupp_not_closed}).

\section{Cosupport in a commutative noetherian ring}\label{cosupp_section}
We set notation, discuss certain derived functors, and define cosupport.

\subsection*{Setting and notation}
Let $R$ be a commutative noetherian ring throughout this paper. Our main objects of study are complexes of $R$-modules, primarily in the derived category $\D(R)$, which we now briefly describe. 

A {\em complex of $R$-modules}, or {\em $R$-complex} for short, is a $\Z$-graded $R$-module along with a differential whose square is zero. An $R$-complex $C$, whose differential is understood to be $\del_C$, is written as
$$\cdots \xrightarrow{\del_C^{i-1}} C^i \xrightarrow{\del_C^i} C^{i+1}\xrightarrow{\del_C^{i+1}}\cdots,$$
where we primarily index cohomologically. We say that an $R$-complex $C$ is {\em bounded on the left (respectively, right)} if $C^i=0$ for $i\ll0$ (respectively, $C^i=0$ for $i\gg0$).  For $R$-complexes $C$ and $D$, the total tensor product complex $C\otimes_R D$ and total Hom complex $\Hom_R(C,D)$ are defined as direct sum and direct product totalizations of their corresponding double complexes, respectively.  An $R$-complex $C$ is {\em acyclic} if $\H^i(C)=0$ for all $i\in \Z$.

The {\em homotopy category} $\K(R)$ is the category whose objects are $R$-complexes and morphisms are degree zero chain maps up to chain homotopy.  If we also invert all quasi-isomorphisms (morphisms of $R$-complexes which induce an isomorphism on cohomology), we obtain the {\em derived category} of $R$, denoted $\D(R)$.  We use the symbol $\simeq$ to denote isomorphisms in $\D(R)$ (i.e., to indicate there is a diagram of quasi-isomorphisms between two complexes). For details on complexes, homotopies, and the derived category, see for example \cite{Avr98} or \cite[Chapter 10]{Wei94}.

We say an $R$-complex $F$ is {\em semi-flat} if $F^i$ is flat for $i\in \Z$ and $F\otimes_R-$ preserves quasi-isomorphisms.  An $R$-complex $P$ is {\em semi-projective} if $P^i$ is projective for $i\in \Z$ and $\Hom_R(P,-)$ preserves quasi-isomorphisms. Dually, an $R$-complex $I$ is {\em semi-injective} if $I^i$ is injective for $i\in \Z$ and $\Hom_R(-,I)$ preserves quasi-isomorphisms. (These are the ``DG-flat/projective/injective'' complexes of \cite{AF91}.) Every $R$-complex $M$ has a {\em semi-projective resolution} (and hence also a {\em semi-flat resolution}) $F\xrightarrow{\simeq}M$ (for existence of such resolutions, see \cite[Proposition 5.6]{Spa88} and also \cite[1.6]{AF91}); similarly, semi-injective resolutions exist. This extends the classical notions of projective, flat, and injective resolutions of modules. 

\subsection*{Derived completion and colocalization}
We remind the reader of two functors on $\D(R)$ that will be used to define cosupport: left-derived completion and right-derived colocalization.  For an ideal $\a\subset R$, the $\a$-adic completion of an $R$-module $M$ is defined as $\Lambda^\a M=\invlim_n(R/\a^n\otimes_R M)$; it will also be denoted by $\widehat{M}^\a$.  This extends to a functor on the homotopy category $\Lambda^\a:\K(R)\to \K(R)$ and---being not necessarily exact outside of the category of finitely generated $R$-modules---has a left derived functor $\LLambda^\a:\D(R)\to \D(R)$, defined using semi-projective resolutions; see \cite{AJL97} and \cite{PSY14}.  For any $R$-complex $M$, and semi-flat resolution $F\xrightarrow{\simeq}M$ (or, more generally, a semi-flat complex $F$ isomorphic to $M$ in $\D(R)$), we have an isomorphism in $\D(R)$ \cite[page 31]{Lip02}; see also \cite[Proposition 3.6]{PSY14}:
\begin{align}\label{Lambda_semiflat}
\LLambda^{\p}M\simeq \invlim_n(R/\p^n\otimes_R F).
\end{align}

The functor $\RHom_R(R_\p,-):\D(R)\to \D(R)$, referred to as right-derived colocalization, is the usual right derived functor of $\Hom_R(R_\p,-)$; namely, letting $M\xrightarrow{\simeq} I$ be any semi-injective resolution of $M$, there is an isomorphism in $\D(R)$: 
$$\RHom_R(R_\p,M)\simeq \Hom_R(R_\p,I).$$ 

\subsection*{Cosupport}
For an $R$-complex $M$, we define (as in \cite{BIK12}) the {\em cosupport} of $M$ to be 
\begin{align}\label{cosupp_defn}
\cosupp_RM=\{\p\in \Spec R\mid  \RHom_R(R_\p,\LLambda^{\p}M)\not\simeq0\}.
\end{align}
This agrees with a variety of other ways to define cosupport; for example, setting $\kappa(\p)=R_\p/\p R_\p$ to be the residue field of $R_\p$, we have \cite[Proposition 4.4]{SWW17}: 
\begin{align}\label{equiv_defns_of_cosupp}
\p\in \cosupp_RM  \Leftrightarrow \RHom_R(\kappa(\p),M)\not\simeq 0 \Leftrightarrow \kappa(\p)\otimes_{R_\p}^\L \RHom_R(R_\p,M)\not\simeq 0.
\end{align} 
In particular, in view of (\ref{Lambda_semiflat}), for a flat $R$-module $F$,
\begin{align}\label{cosupp_for_flat_module_defn}
\cosupp_RF=\{\p\in \Spec R\mid \Ext_R^*(R_\p,\Lambda^\p F)\not=0\}.
\end{align}

For comparison, the {\em support}\footnote{This is referred to as the {\em small support} in \cite{Fox79}.} of a complex $M$ with $\H^i(M)=0$ for $i\ll0$ is:
$$\supp_RM=\{\p\in \Spec R\mid \kappa(\p)\otimes_R^\L M\not\simeq0\}.$$
Equivalently, $\supp_RM$ is the set of prime ideals such that $E(R/\p)$ occurs as one of the indecomposable injective modules in the minimal semi-injective resolution of $M$ \cite[Remark 2.9]{Fox79}, see also \cite{CI10}. This perspective motivates the main result in the next section.

\section{Cosupport via minimal complexes of cotorsion flat modules}\label{cosupp_detected_by_primes_subsection}
We show in this section that minimal cotorsion flat complexes, as characterized in \cite{Tho17a}, can be used to detect cosupport.

\subsection*{Cotorsion flat modules}
An $R$-module $T$ is called {\em cotorsion flat} if it is flat and satisfies $\Ext_R^1(F,T)=0$ for every flat $R$-module $F$ (i.e., it is also cotorsion). Enochs showed \cite[Theorem]{Eno84} that cotorsion flat $R$-modules decompose uniquely as a product of completions of free $R_\q$-modules, for $\q\in \Spec R$; namely, $T$ is cotorsion flat if and only if 
\begin{align}\label{CF_decomposition}
T\cong \prod_{\q\in \Spec R}\widehat{R_\q^{(X_\q)}}^\q,
\end{align}
for some (possibly empty or infinite) sets $X_\q$.   
Set $T_\q=\widehat{R_\q^{(X_\q)}}^\q$ (despite the notation, we caution this is not a localization of $T$, rather it is the $q$-th component of $T$). For a cotorsion flat $R$-module $T$ as in (\ref{CF_decomposition}) and a fixed prime ideal $\p$, there are isomorphisms \cite[Lemma 2.2]{Tho17a}:
\begin{align}\label{completion_and_colocalization_of_CF}
\Lambda^\p(T)\cong \prod_{\q\supseteq \p} T_\q \quad \text{ and }\quad \Hom_R(R_\p,T)\cong \prod_{\q\subseteq \p} T_\q.
\end{align}
Further, for any complex $B$ of cotorsion flat $R$-modules, the natural maps $R\to R_\p$ and $R\to \Lambda^\p R$ induce degreewise split maps: $\Hom_R(R_\p,B)\hookrightarrow B$ and $B\twoheadrightarrow \Lambda^\p B$. 

For an indecomposable injective $R$-module $E(R/\p)$ and set $X$, there is an isomorphism \cite[Lemma 4.1.5]{Xu96}:
\begin{align}\label{cf_inj}
\Hom_R(E(R/\p),E(R/\p)^{(X)})\cong \widehat{R_\p^{(X)}}^\p.
\end{align}
Further, for any two indecomposable injective $R$-modules $E(R/\q)$ and $E(R/\p)$, there is an isomorphism:
\begin{align}\label{Hom_CF}
\Hom_R(E(R/\q),E(R/\p))\cong \begin{cases} \widehat{R_\q^{(X_\q)}}^\q,\text{ for some set $X_\q\not=\varnothing$,} & \q\subseteq \p\\ 0 & \q\not\subseteq \p\end{cases}.
\end{align}
For $\q\not\subseteq \p$, this is follows because $E(R/\p)\cong \Hom_{R}(R_\p,E(R/\p))$ and so adjointness yields an isomorphism
$$\Hom_R(E(R/\q),E(R/\p))\cong \Hom_{R}(R_\p\otimes_R E(R/\q),E(R/\q)),$$
which is $0$, as each element of $E(R/\q)$ is annihilated by a power of $\q$.

For $\q\subseteq \p$, one uses $E(R/\q)\cong R_\q\otimes_{R_\q} E(R/\q)$ and adjointness to show  
$$\Hom_R(E(R/\q), E(R/\p))\cong \Hom_{R_\q}(E(R/\q),\Hom_R(R_\q,E(R/\p))).$$
Since the module $\Hom_R(R_\q,E(R/\p))$ is an injective $R_\q$-module, it must have the form $\oplus_{\q'\subseteq \q}E(R/\q')$, but we have just shown that $\Hom_R(E(R/\q),E(R/\q'))=0$ if $\q'\subsetneq \q$ hence
$$\Hom_R(E(R/\q), \Hom_R(R_\q,E(R/\p)))\cong \Hom_R(E(R/\q),E(R/\q)^{(X)})$$
for some set $X$. Finally, apply the isomorphism in (\ref{cf_inj}).

\subsection*{Minimal complexes of cotorsion flat modules}
As defined in \cite{AM02}, we say a complex $C$ is {\em minimal} if every homotopy equivalence $\gamma:C\to C$ is an isomorphism. Similar to minimality criteria for injective resolutions or projective resolutions of finitely generated modules in a local ring, we have a criterion for minimality of complexes of cotorsion flat $R$-modules:
\begin{thm}\cite[Theorem 3.5]{Tho17a} \label{CF_minimal_thm}
Let $R$ be a commutative noetherian ring and $B$ a complex of cotorsion flat $R$-modules. The complex $B$ is minimal if and only if the complex $R/\p\otimes_R \Hom_R(R_\p,\Lambda^\p B)$ has zero differential for every $\p\in \Spec R$.
\end{thm}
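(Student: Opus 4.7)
The plan is to use the decomposition (\ref{CF_decomposition}) of cotorsion flat modules together with the identities (\ref{completion_and_colocalization_of_CF}) to reduce the minimality condition to a prime-by-prime check in $\kappa(\p)$-vector spaces. First I would identify $\Hom_R(R_\p, \Lambda^\p B)$ with the ``$\p$-component'' of $B$: if $B^i \cong \prod_\q T_\q^i$ with $T_\q^i = \widehat{R_\q^{(X_\q^i)}}^\q$, then (\ref{completion_and_colocalization_of_CF}) yields $\Hom_R(R_\p, \Lambda^\p B^i) \cong T_\p^i$. After tensoring with $R/\p$, this becomes $\kappa(\p)^{(X_\p^i)}$, and the induced differential records precisely the ``$\p$-to-$\p$ modulo $\p$'' part of the differential on $B$. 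Minimality should then be the exact analogue of the classical condition for a projective resolution over a local ring that the differential sends $P$ into $\m P$.

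For the ``if'' direction, I would assume $R/\p \otimes_R \Hom_R(R_\p,\Lambda^\p B)$ has zero differential for every $\p$, and let $\gamma: B \to B$ be a homotopy equivalence. Since $\Hom_R(R_\p, \Lambda^\p -)$ and $R/\p \otimes_R -$ are additive, they preserve chain homotopies, so $\gamma$ descends to a homotopy equivalence on $R/\p \otimes_R \Hom_R(R_\p, \Lambda^\p B)$. But a homotopy equivalence between complexes with zero differential is just a graded isomorphism, so $\gamma$ induces isomorphisms on each $\kappa(\p)^{(X_\p^i)}$. A Nakayama-style argument using $\p$-adic completeness of each $T_\p^i$ promotes these to isomorphisms on the diagonal components of $\gamma^i$; combined with the rigidity of morphisms between the prime-indexed summands of cotorsion flat modules, one concludes $\gamma^i$ itself is an isomorphism.

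For the ``only if'' direction, I would argue contrapositively: suppose $R/\p \otimes_R \Hom_R(R_\p, \Lambda^\p B)$ has nonzero differential at some $\p$ in some degree $i$. Lift this to a single basis element of $T_\p^i$ whose image under the differential has nonzero reduction mod $\p$ in some summand of $T_\p^{i+1}$, producing a two-term contractible subcomplex of the form $0 \to \widehat{R_\p}^\p \xrightarrow{\sim} \widehat{R_\p}^\p \to 0$. Using the splitting properties within the cotorsion flat class, one shows this subcomplex is a direct summand of $B$; projection onto the complement is then a homotopy equivalence $B \to B$ that fails to be an isomorphism, contradicting minimality.

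The main obstacle is extracting a \emph{globally} split contractible summand in the ``only if'' direction. Given a nonzero mod-$\p$ differential at one basis vector, one must verify both that a genuine rank-one summand $\widehat{R_\p}^\p$ of $B^i$ maps isomorphically onto a summand of $B^{i+1}$, and that this short subcomplex splits off from the entire complex $B$, not just from its $\p$-piece. Both steps rest on a careful understanding of morphisms among the prime-indexed cotorsion flat pieces. By contrast, the Nakayama lift in the ``if'' direction should be routine once completeness and this same rigidity are in place.
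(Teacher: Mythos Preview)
The paper does not contain a proof of this statement: Theorem~\ref{CF_minimal_thm} is quoted verbatim from \cite[Theorem 3.5]{Tho17a} and is used as a black box throughout (in the proofs of Theorem~\ref{cosupp_cf}, Proposition~3.1, and Proposition~\ref{cosupp_of_indec_inj}). There is therefore nothing in this paper to compare your proposal against.

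That said, your outline is the natural strategy and matches the shape of the argument one expects in \cite{Tho17a}: isolate the $\p$-strand $T_\p^\bullet$ via (\ref{completion_and_colocalization_of_CF}), reduce modulo $\p$ to a complex of $\kappa(\p)$-vector spaces, and treat minimality as the analogue of ``$\partial(P)\subseteq \m P$'' over a complete local ring. Your identification of the delicate point is accurate. In the ``if'' direction, the real work is not the Nakayama lift on each diagonal block $T_\p^i\to T_\p^i$ (that is indeed routine for $\p$-adically complete modules), but controlling the off-diagonal components of $\gamma^i:\prod_\q T_\q^i\to\prod_\q T_\q^i$. The phrase ``rigidity of morphisms between the prime-indexed summands'' is doing a lot of heavy lifting here: one needs to know that any map $T_\q\to T_{\q'}$ with $\q\neq\q'$ either vanishes or factors through $\q' T_{\q'}$, so that the matrix of $\gamma^i$ is, after reduction, ``unipotent plus diagonal isomorphism'' in a sense that can be inverted over the infinite product. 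That structural fact about $\Hom$ between completed localizations is exactly what \cite{Tho17a} supplies; without it your argument does not close. The ``only if'' direction has the same issue in disguise: producing a genuinely split two-term contractible summand of $B$ (not merely of its $\p$-strand) again requires understanding how a rank-one piece $\widehat{R_\p}$ sits inside $\prod_\q T_\q^i$ as a direct summand compatible with the differential, and this uses the same Hom computations.

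So your plan is sound, but both directions hinge on a lemma about morphisms between the indecomposable cotorsion flat pieces that you have only named, not proved; that lemma is the substantive content imported from \cite{Tho17a}.
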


\subsection*{Cotorsion flat resolutions and replacements}
For an $R$-module $M$, a {\em right cotorsion flat resolution} of $M$ is a complex $B$ of cotorsion flat $R$-modules along with a quasi-isomorphism $M\xrightarrow{\simeq} B$ such that $B^i=0$ for $i<0$.  Dually, a {\em left cotorsion flat resolution} of $M$ is a complex $B$ of cotorsion flat $R$-modules with $B^i=0$ for $i>0$ together with a quasi-isomorphism $B\xrightarrow{\simeq} M$.

Every flat $R$-module has a minimal right cotorsion flat resolution; every cotorsion $R$-module has a minimal left cotorsion flat resolution \cite[Theorem 5.2]{Tho17a}. Indeed every $R$-module $M$ is isomorphic in $\D(R)$ to a minimal semi-flat complex of cotorsion flat $R$-modules: there exists \cite[Theorem 5.2]{Tho17a} a diagram of quasi-isomorphisms
\begin{align}\label{CF_replacement}
B\xleftarrow{\simeq} F\xrightarrow{\simeq} M
\end{align}
where $F$ is a minimal flat resolution of $M$ (in fact, built from flat covers) and $B$ is a minimal semi-flat complex of cotorsion flat $R$-modules; we call $B$ a {\em minimal degreewise cotorsion flat replacement of $M$}. It turns out the slightly weaker notion of a minimal degreewise cotorsion flat replacement of $M$ (not necessarily a resolution) is sufficient for computing cosupport in Theorem \ref{cosupp_cf} below. It would be interesting to determine, given an $R$-complex $M$ (not just an $R$-module), whether one can find a minimal complex of cotorsion flat $R$-modules isomorphic to it in $\D(R)$.

\subsection*{Detecting cosupport}
We now show that a minimal degreewise cotorsion flat replacement of a module can be used to detect its cosupport, dual to the fact that minimal injective resolutions detect support \cite{Fox79}. This will be a primary tool in computing cosupport in the remainder of this paper.  
For a complex $B$ of cotorsion flat $R$-modules, we colloquially say $\p$ {\em appears in} $B$ if $\widehat{R_\p^{(X_\p)}}^\p$ is a nonzero summand of $B^i$ for some $i$. If $R$ is a commutative noetherian ring, $M$ is an $R$-module, and $B$ is a minimal semi-flat degreewise cotorsion flat replacement of $M$ (which exists by \cite[Theorem 5.2]{Tho17a}), then the following result shows that $\p\in \cosupp_R M$ if and only if $\p$ appears in $B$.

\begin{thm}\label{cosupp_cf}
Let $R$ be a commutative noetherian ring and $M$ be an $R$-complex. If there exists a minimal semi-flat complex of cotorsion flat $R$-modules $B$ which is isomorphic to $M$ in $\D(R)$, and one of the following holds: 
\begin{enumerate}
\item[(i)] $\pd_R R_\p<\infty$ for every $\p\in \Spec R$, or
\item[(ii)] $B$ is bounded on the left, that is, $B^i=0$ for all $i\ll 0$,
\end{enumerate}
then
$$\p\in \cosupp_R M \iff \smash{\widehat{R_\p^{(X)}}^\p\not=0} \text{ is a summand of $B^i$, some $i\in \Z$ and set $X$.}$$
\end{thm}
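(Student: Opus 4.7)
The plan is to unravel each functor in $\RHom_R(R_\p,\LLambda^\p M)$ using the concrete model $B$ for $M$, and then read off non-acyclicity from the minimality criterion of Theorem \ref{CF_minimal_thm}. Write each $B^i\cong \prod_{\q}T_\q^{(i)}$ with $T_\q^{(i)}=\widehat{R_\q^{(X_\q^{(i)})}}^\q$ as in (\ref{CF_decomposition}).

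\emph{Step 1: computing $\LLambda^\p M$.} Since $B$ is semiflat with $B\simeq M$, formula (\ref{Lambda_semiflat}) identifies $\LLambda^\p M$ with the componentwise $\p$-adic completion $\Lambda^\p B$. By the first half of (\ref{completion_and_colocalization_of_CF}), this is again a complex of cotorsion flat modules with $(\Lambda^\p B)^i\cong \prod_{\q\supseteq \p}T_\q^{(i)}$.

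\emph{Step 2: computing $\RHom_R(R_\p,\Lambda^\p B)$.} The key pointwise input is: for any cotorsion flat $T$ and any flat $R$-module $F$, $\Ext_R^j(F,T)=0$ for all $j\ge 1$. This is because each syzygy in a projective resolution of $F$ remains flat (as $F$ is flat), so the cotorsion condition $\Ext_R^1(-,T)=0$ against flat modules propagates to higher Ext by dimension shifting. In particular each term of $\Lambda^\p B$ is $\Hom_R(R_\p,-)$-acyclic. Under hypothesis (ii), $\Lambda^\p B$ is bounded on the left, so pointwise acyclicity directly yields $\RHom_R(R_\p,\Lambda^\p B)\simeq \Hom_R(R_\p,\Lambda^\p B)$. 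Under hypothesis (i), a bounded projective resolution $P\xrightarrow{\simeq}R_\p$ exists, and the spectral sequence of the double complex $\Hom_R(P,\Lambda^\p B)$ collapses by the same Ext-vanishing and converges thanks to boundedness of $P$, giving the identification even when $B$ is unbounded in both directions. Applying the second half of (\ref{completion_and_colocalization_of_CF}) in each degree then shows $\Hom_R(R_\p,\Lambda^\p B)^i\cong T_\p^{(i)}$.

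\emph{Step 3: applying minimality.} If $\p$ does not appear in $B$, then $T_\p^{(i)}=0$ for all $i$, so $\Hom_R(R_\p,\Lambda^\p B)=0$ as a complex and hence $\p\notin \cosupp_R M$. Conversely, if $\p$ appears in $B$, then by Theorem \ref{CF_minimal_thm} the complex $R/\p\otimes_R\Hom_R(R_\p,\Lambda^\p B)$ has zero differential and at least one nonzero term $\kappa(\p)^{(X_\p^{(i)})}$, hence has nonzero cohomology. As $\Hom_R(R_\p,\Lambda^\p B)$ is a complex of flat $R_\p$-modules, the underived and derived tensor products with $\kappa(\p)$ over $R_\p$ agree, so acyclicity of $\Hom_R(R_\p,\Lambda^\p B)$ would force the above to vanish---a contradiction. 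Thus $\RHom_R(R_\p,\LLambda^\p M)\not\simeq 0$ and $\p\in\cosupp_R M$. The main technical obstacle lies in the unbounded case of Step 2: without boundedness of $B$ from (ii), the standard Cartan--Eilenberg style argument using pointwise acyclicity does not apply, and one must instead exploit $\pd_R R_\p<\infty$ to bound the resolution of $R_\p$ so that the relevant spectral sequence still converges.
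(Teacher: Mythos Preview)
Your overall strategy matches the paper's: compute $\LLambda^\p M$ via the semiflat model $B$, reduce $\RHom_R(R_\p,-)$ to $\Hom_R(R_\p,-)$ using either boundedness of $B$ or finiteness of $\pd_R R_\p$, and then read off the answer from the minimality criterion. Your Step~1 is identical to the paper's, and your Step~2 is a repackaging of the paper's Claim~2 (you phrase it via Ext-vanishing for cotorsion modules and a spectral sequence; the paper uses the cone of a projective resolution $P\xrightarrow{\simeq}R_\p$ together with \cite[Lemma~2.5]{CFH06}, but these amount to the same thing).

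The gap is in Step~3. Your sentence ``As $\Hom_R(R_\p,\Lambda^\p B)$ is a complex of flat $R_\p$-modules, the underived and derived tensor products with $\kappa(\p)$ over $R_\p$ agree'' is not valid in general: a complex of flat modules is semiflat only under additional hypotheses (for instance, when it is bounded below). Under hypothesis~(ii) this is fine, since $\Hom_R(R_\p,\Lambda^\p B)$ inherits left-boundedness from $B$ and is therefore semiflat. But under hypothesis~(i) the complex may be unbounded in both directions, and then an acyclic complex of flat modules need not remain acyclic after tensoring with $\kappa(\p)$. So your contradiction argument (``acyclicity \dots\ would force the above to vanish'') is unjustified precisely in the case you flagged as the technical obstacle.

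The paper avoids this issue entirely. Its Claim~1 shows, by a direct elementary argument using only right-exactness of $R/\p\otimes_R-$ and the zero-differential condition from Theorem~\ref{CF_minimal_thm}, that if $\Hom_R(R_\p,\Lambda^\p B)$ is acyclic then each term $T_\p^i$ is actually zero (so the complex is zero, not merely acyclic). This works uniformly in cases~(i) and~(ii) and requires no semiflatness of the subquotient complex. To repair your argument, replace the semiflat/derived-tensor claim with this direct kernel-by-kernel analysis.
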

\begin{rem}\label{JRGthm}
If $R$ has finite Krull dimension, then work of Jensen \cite[Proposition 6]{Jen70} and Raynaud-Gruson \cite[Seconde partie, Th\`{e}or\'{e}me 3.2.6]{RG71} implies that $R$ satisfies condition (i) of the theorem, and we need no boundedness assumptions on $B$.  On the other hand, the minimal right cotorsion flat resolution of any flat $R$-module is semi-flat and bounded on the left, and so to compute $\cosupp_R R$, we need not impose any additional finiteness conditions on $R$.
\end{rem}

\begin{proof}[Proof of Theorem \ref{cosupp_cf}]
Since $B$ is semi-flat and isomorphic to $M$ in $\D(R)$, we have that $\p\in \cosupp_R M$ if and only if $\RHom_R(R_\p,\Lambda^\p B)\not\simeq0$ by (\ref{Lambda_semiflat}) and definition (\ref{cosupp_defn}). We will show that $\RHom_R(R_\p,\Lambda^\p B)\simeq \Hom_R(R_\p,\Lambda^\p B)$ and that $\Hom_R(R_\p,\Lambda^\p B)\not\simeq 0$ if and only if $\widehat{R_\p^{(X)}}^\p$ is a non-zero direct summand of $B^i$, for some $i\in \Z$ and some set $X$.

For each $i\in \Z$, the module $B^i$ has the form given by (\ref{CF_decomposition}), and so by (\ref{completion_and_colocalization_of_CF}) the complex $\Hom_R(R_\p,\Lambda^\p B)$ can be identified with the subquotient complex
$$\quad\quad \cdots \to \widehat{R_\p^{(X_\p^i)}}^\p \to \widehat{R_\p^{(X_\p^{i+1})}}^\p\to \cdots$$
of $B$, with induced differential.  It now follows that the following equivalences hold:
\begin{align*}
\text{$\widehat{R_\p^{(X)}}^\p$}&\text{ is a non-zero summand of $B^i$, for some $i\in \Z$ and set $X$,}\\
&\iff\Hom_R(R_\p,\Lambda^\p B)\not=0\\
&\iff R/\p\otimes_R \Hom_R(R_\p,\Lambda^\p B)\not=0\\
&\iff R/\p\otimes_R \Hom_R(R_\p,\Lambda^\p B)\not\simeq 0,
\end{align*}
where the second equivalence holds since $R/\p\otimes_R \widehat{R_\p^{(X_\p^i)}}^\p\cong (R_\p/\p R_\p)^{(X_\p^i)}$ for each $i\in \Z$ and the third equivalence holds because the complex $R/\p\otimes_R \Hom_R(R_\p,\Lambda^\p B)$ has zero differential by minimality of $B$ and Theorem \ref{CF_minimal_thm}.

To complete the proof, it remains to show that $R/\p \otimes_R \Hom_R(R_\p,\Lambda^\p B)\not\simeq 0$ if and only if $\Hom_R(R_\p,\Lambda^\p B)\not\simeq0$ and that $\RHom_R(R_\p,\Lambda^\p B)\simeq \Hom_R(R_\p,\Lambda^\p B)$. We accomplish these statments in the next two claims.

\medskip
\noindent
{\em Claim 1:} We have $\Hom_R(R_\p,\Lambda^\p B)\simeq0$ if and only if $R/\p\otimes_R \Hom_R(R_\p,\Lambda^\p B)\simeq 0$.\\

\noindent
{\em Proof of Claim 1:} 
Suppose $\Hom_R(R_\p,\Lambda^\p B)$ is acyclic and set $K^i=\ker(T_\p^i\to T_\p^{i+1})$, where $T_\p^i=\widehat{R_\p^{(X_\p^i)}}^\p$.  For each $i\in \Z$, the exact sequence $0\to K^i\to T_\p^i\to K^{i+1}\to 0$, along with minimality of $B$ (using Theorem \ref{CF_minimal_thm}), induces an exact sequence
$$R/\p\otimes_R K^i\twoheadrightarrow R/\p\otimes_R T_\p^i\xrightarrow{0}R/\p\otimes_R K^{i+1} \to 0.$$
It follows, for every $i\in \Z$, that $R/\p\otimes_R K^{i}=0$, and hence $R/\p\otimes_R T_\p^{i}=0$. Once again, as $R/\p\otimes_R-$ commutes with completion, we obtain that $X_\p^i=\varnothing$ and so $T_\p^{i}=0$ for every $i\in \Z$.  Therefore $\Hom_R(R_\p,\Lambda^\p B)=0$, and the forward implication follows.

Conversely, minimality of $B$ implies that $R/\p\otimes_R \Hom_R(R_\p,\Lambda^\p B)$ has zero differential by Theorem \ref{CF_minimal_thm}, and so acyclicity of this complex implies it is in fact the zero complex. Using again that $R/\p\otimes_RT_\p^i=0$ if and only if $T_\p^i=0$ for each $i\in \Z$, this forces $\Hom_R(R_\p,\Lambda^\p B)=0$. This justifies the first claim.

\medskip
\noindent
{\em Claim 2:} Assuming either condition (i) or (ii), there is an isomorphism in $\D(R)$:
\begin{align*}
\RHom_R(R_\p,\Lambda^\p B)\simeq\Hom_R(R_\p,\Lambda^\p B).
\end{align*}
{\em Proof of Claim 2:}
Let $f:P\xrightarrow{\simeq} R_\p$ be a projective resolution (chosen to be bounded if condition (i) holds) and set $C=\cone(f)$.  The triangulated functor $\Hom_R(-,\Lambda^\p B)$ on $\K(R)$ yields an exact triangle 
\begin{align}\label{triangle}
\Hom_R(C,\Lambda^\p B)\to\Hom_R(R_\p,\Lambda^\p B)\xrightarrow{f^*}\Hom_R(P,\Lambda^\p B)\to.
\end{align}
Since $C$ is an acyclic complex of flat $R$-modules that is bounded on the right, all kernels of $C$ are also flat, hence, because each module $(\Lambda^\p B)^i$ is cotorsion, the complex $\Hom_R(C,(\Lambda^\p B)^i)$ is acyclic for each $i$.  If either condition (i) or (ii) holds, it follows by \cite[Lemma 2.5]{CFH06} that $\Hom_R(C,\Lambda^\p B)$ is acyclic. Taking cohomology of (\ref{triangle}) shows that $f^*$ is a quasi-isomorphism.  Since $\RHom_R(R_\p,\Lambda^\p B)\simeq \Hom_R(P,\Lambda^\p B)$ in $\D(R)$, this verifies Claim 2.  
\end{proof}

An immediate consequence of Theorem \ref{cosupp_cf} is that we are able to easily construct a module with a prescribed cosupport. 
\begin{cor}
Let $W\subseteq \Spec R$ be any subset.  Then $M=\prod_{\p\in W}\widehat{R_\p}^\p$ is an $R$-module with $\cosupp_R M=W$.
\end{cor}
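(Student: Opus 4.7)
The plan is to view $M$ as a complex concentrated in degree $0$ and then apply Theorem~\ref{cosupp_cf} directly. The key observation is that $M$ is already a product of cotorsion flat modules of the form $\widehat{R_\q^{(X_\q)}}^\q$ (with $X_\p$ a singleton for $\p\in W$ and $X_\q=\emptyset$ otherwise), so by Enochs' decomposition recalled in (\ref{CF_decomposition}), $M$ itself is cotorsion flat, and in particular flat. Therefore, treating $M$ as a complex in degree $0$, it is a single flat $R$-module, hence semiflat. It is trivially minimal in the sense of Theorem~\ref{CF_minimal_thm}: its differential is zero, so $R/\p\otimes_R \Hom_R(R_\p,\Lambda^\p M)$ has zero differential for every prime $\p$. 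It is also bounded on the left, since $M^i=0$ for all $i<0$.

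With these verifications, condition (ii) of Theorem~\ref{cosupp_cf} applies, giving the equivalence
\[
\p\in \cosupp_R(M)\iff \widehat{R_\p^{(X)}}^\p \text{ is a nonzero summand of } M^i \text{ for some } i\in\Z \text{ and some set } X.
\]
Since $M$ is concentrated in degree $0$, the condition on the right reduces to whether $\widehat{R_\p}^\p$ appears as a summand in the cotorsion flat decomposition of $M$. By the uniqueness statement in Enochs' decomposition theorem, this happens precisely when $\p\in W$. Thus $\cosupp_R(M)=W$.

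I do not anticipate any serious obstacle: the construction is tailored so that the minimal cotorsion flat ``replacement'' of $M$ is simply $M$ itself. The only mild subtlety is noting that condition (ii) of Theorem~\ref{cosupp_cf}, rather than (i), is what we need here so that no finite-dimensionality or projective-dimension hypothesis on $R$ is required. Once that is observed, the corollary follows by direct reading of the theorem together with Enochs' uniqueness.
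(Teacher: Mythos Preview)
Your argument is correct and is exactly the intended one: the paper records this corollary as an immediate consequence of Theorem~\ref{cosupp_cf}, and your observation that $M$, viewed as a complex in degree~$0$, is already a semiflat, minimal complex of cotorsion flat modules satisfying condition~(ii) is precisely the content of that remark. The appeal to uniqueness in Enochs' decomposition to read off the primes appearing in $M$ is the right way to finish.
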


\section{Cosupport of cotorsion modules} \label{section_cotorsion}
The purpose of this section is to illustrate how minimal cotorsion flat resolutions can be utilized to compute the cosupport of a cotorsion module, since every cotorsion module has such a resolution \cite[Theorem 5.2]{Tho17a}. 

For a ring $R$ and prime $\p\in\Spec R$, the module $\kappa(\p)=R_\p/\p R_\p$ is cotorsion, since $\kappa(\p) \cong \Hom_{R_\p}(\kappa(\p),E(R/\p))$.

\begin{prop}
Let $R$ be a commutative noetherian ring of finite Krull dimension.   Then
$$\cosupp_R\kappa(\p)=\{\p\}=\supp_R \kappa(\p).$$
\end{prop}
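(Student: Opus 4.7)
The plan is to apply the $\RHom$-characterization of cosupport recorded in (\ref{equiv_defns_of_cosupp}), namely that $\q\in\cosupp_R\kappa(\p)$ iff $\RHom_R(\kappa(\q),\kappa(\p))\not\simeq 0$, in parallel with the analogous tensor-product characterization of support. Both computations then reduce to a single observation: whenever $\q\neq\p$, there exists an element $x\in R$ that acts as zero on one of $\kappa(\q),\kappa(\p)$ and as a unit on the other, and such an $x$ will annihilate every $\Ext_R^i(\kappa(\q),\kappa(\p))$ and $\Tor^R_i(\kappa(\q),\kappa(\p))$.

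Concretely, given $\q\neq\p$, I would pick $x\in\q\setminus\p$ in the case $\q\not\subseteq\p$, so that $x=0$ in $\kappa(\q)$ while $x$ is a unit in the field $\kappa(\p)$; and pick $x\in\p\setminus\q$ in the case $\q\subsetneq\p$, so that $x$ is a unit in $\kappa(\q)$ and $x=0$ in $\kappa(\p)$. Because the $R$-action on $\Ext_R^i(\kappa(\q),\kappa(\p))$ induced via the first argument agrees with the one induced via the second, and likewise for $\Tor^R_i$, this $x$ acts on each $\Ext_R^i$ and $\Tor^R_i$ both as the zero map and as an isomorphism, forcing them to vanish in every degree. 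Thus $\RHom_R(\kappa(\q),\kappa(\p))\simeq 0$ and $\kappa(\q)\otimes_R^{\L}\kappa(\p)\simeq 0$ for all $\q\neq\p$. For $\q=\p$ the non-vanishing is immediate: the identity is a nonzero element of $\Hom_R(\kappa(\p),\kappa(\p))$, and $\H_0(\kappa(\p)\otimes_R^{\L}\kappa(\p))=\kappa(\p)\neq 0$, so $\p$ belongs to both sets.

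I do not foresee a genuine obstacle; the only point requiring any care is the coincidence of the two natural $R$-actions on $\Ext$ and on $\Tor$, which is standard and unconditional. In particular the sketch above does not actually invoke finite Krull dimension. That hypothesis does become essential, however, if one wishes instead to deduce the cosupport half in the spirit of the surrounding section by applying Theorem~\ref{cosupp_cf} to a minimal cotorsion flat replacement of $\kappa(\p)$, with Remark~\ref{JRGthm} arranging condition~(i). In that alternative route one would construct the replacement starting from the natural surjection $\widehat{R_\p}^{\p}\twoheadrightarrow\kappa(\p)$ by iteratively covering the kernels, verify that only summands of the form $\widehat{R_\p^{(X)}}^{\p}$ ever appear, and read the cosupport off those summands; I would mention this briefly as it better illustrates the paper's framework, but carry out the direct $x$-element argument as the main proof.
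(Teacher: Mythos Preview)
Your direct argument via the $\RHom$ and $\otimes^{\L}$ characterizations is correct, and as you note it does not actually require finite Krull dimension. The paper, however, takes the route you sketch only at the end: it dualizes the minimal injective resolution $\kappa(\p)\xrightarrow{\simeq}I$ into $E(R/\p)$ to obtain a left cotorsion flat resolution $\Hom_R(I,E(R/\p))\xrightarrow{\simeq}\kappa(\p)$, checks minimality via Theorem~\ref{CF_minimal_thm} by showing the relevant complexes have zero differential, observes that only the prime $\p$ appears in this resolution, and then invokes Theorem~\ref{cosupp_cf} (with condition~(i) supplied by Remark~\ref{JRGthm}, which is where finite Krull dimension enters). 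Your element-$x$ argument is shorter, more elementary, and more general; the paper's approach is there specifically to exhibit the cotorsion flat machinery in action on a simple example, so it trades efficiency for illustration of the section's main tool.
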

\begin{proof}
Localizing a minimal injective resolution of $R/\p$ at $\p$ yields a minimal injective resolution $\kappa(\p)\xrightarrow{\simeq} I$, which shows that $E(R/\p)$ is the only indecomposable injective $R$-module appearing in $I$; in particular, $\supp_R\kappa(\p)=\{\p\}$.  Further, one obtains a resolution:
$$\Hom_R(I,E(R/\p))\xrightarrow{\simeq} \Hom_R(\kappa(\p),E(R/\p))\cong \kappa(\p).$$
Since $E(R/\p)$ is the only indecomposable injective module in $I$, the complex
$$\Hom_R(I,E(R/\p))\quad=\quad \cdots \to \widehat{R_\p^{(X_\p^1)}}\to \widehat{R_\p^{(X_\p^0)}}\to 0$$
is a left cotorsion flat resolution of $\kappa(\p)$ with $X_\q^i=0$ for all $\q\not=\p$ and $X_\p^0\not=0$.  We claim $\Hom_R(I,E(R/\p))$ is minimal: Since $I$ is minimal, $\Hom_{R_\p}(\kappa(\p),I_\p)$ has zero differential, and therefore, by standard adjointness and \cite[Proposition 2.1(ii)]{CH09}:
\begin{align*}
R/\p\otimes_R \Hom_R(R_\p,\Hom_R(I,E(R/\p))) &\cong \kappa(\p)\otimes_{R_\p}\Hom_{R_\p}(I_\p,E(R/\p))\\
&\cong \Hom_{R_\p}(\Hom_{R_\p}(\kappa(\p),I_\p),E(R/\p))
\end{align*}
has zero differential as well, implying that $\Hom_R(I,E(R/\p))$ is minimal by Theorem \ref{CF_minimal_thm}.  Finally, the result follows by Theorem \ref{cosupp_cf}.
\end{proof}

Cosupport of an injective $R$-module has been described elsewhere, see \cite[Proposition 5.4]{BIK12} and \cite[Proposition 6.3]{SWW17}, but we give a different proof here to illustrate the use of minimal complexes of cotorsion flat modules to compute cosupport. 
\begin{prop}\label{cosupp_of_indec_inj}
Let $R$ be a commutative noetherian ring of finite Krull dimension and $E(R/\p)$ an indecomposable injective $R$-module. Then
$$\cosupp_RE(R/\p)= \{\q\in\Spec R\mid \q\subseteq \p\}.$$
\end{prop}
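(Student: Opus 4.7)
The plan is to exploit the characterization of cosupport from (\ref{equiv_defns_of_cosupp}): a prime $\q$ lies in $\cosupp_R E(R/\p)$ precisely when $\RHom_R(\kappa(\q),E(R/\p)) \not\simeq 0$. Since $E(R/\p)$ is itself injective, this derived Hom coincides with $\Hom_R(\kappa(\q),E(R/\p))$, so the problem reduces to determining for which $\q$ this Hom-group is nonzero.

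For the containment $\cosupp_R E(R/\p) \subseteq \{\q \in \Spec(R) : \q \subseteq \p\}$, I would take $\q \not\subseteq \p$ and pick $s \in \q \setminus \p$. Because $E(R/\p)$ is naturally an $R_\p$-module, multiplication by $s$ is an isomorphism on it, while $s \in \q$ annihilates $\kappa(\q)$. Any $R$-linear $\phi \colon \kappa(\q) \to E(R/\p)$ then satisfies $s\phi(x) = \phi(sx) = 0$ for every $x$, and invertibility of $s$ on the codomain forces $\phi \equiv 0$.

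For the reverse containment, I would take $\q \subseteq \p$ and apply the exact functor $\Hom_R(-, E(R/\p))$ to the embedding of the domain $R/\q$ into its field of fractions $\kappa(\q)$; injectivity of $E(R/\p)$ yields a surjection
\[
\Hom_R(\kappa(\q), E(R/\p)) \twoheadrightarrow \Hom_R(R/\q, E(R/\p)) = (0:_{E(R/\p)} \q).
\]
Because $\q \subseteq \p$, every element of $E(R/\p)$ killed by $\p$ is also killed by $\q$, so the socle $(0:_{E(R/\p)} \p) = \kappa(\p)$ is contained in $(0:_{E(R/\p)} \q)$. Since $\kappa(\p) \neq 0$, we conclude $\Hom_R(\kappa(\q), E(R/\p)) \neq 0$, i.e., $\q \in \cosupp_R E(R/\p)$.

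To fit this into the cotorsion flat framework advertised in the passage, I would note that $E(R/\p)$ is cotorsion (every injective is pure-injective, hence cotorsion), so by \cite[Proposition 4.8]{Tho17a} together with the finite Krull dimension hypothesis, $E(R/\p)$ admits a semiflat minimal cotorsion flat replacement; Remark \ref{JRGthm} verifies hypothesis (i) of Theorem \ref{cosupp_cf}, which then equates $\cosupp_R E(R/\p)$ with the primes appearing in that replacement. The main conceptual step, rather than any computational obstacle, is recognizing that the reverse containment collapses to the elementary socle inclusion $(0:_E \p) \subseteq (0:_E \q)$ whenever $\q \subseteq \p$, which lets us bypass any explicit construction of the cotorsion flat replacement.
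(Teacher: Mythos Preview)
Your argument is correct and is essentially the standard proof (as in \cite{BIK12} or \cite{SWW17}) that the paper explicitly acknowledges but chooses not to reproduce. The paper's proof is deliberately different: it constructs the minimal left cotorsion flat resolution of $E(R/\p)$ as $\Hom_R(I,E(R/\p))$, where $R\xrightarrow{\simeq} I$ is the minimal injective resolution of $R$, verifies minimality via Theorem~\ref{CF_minimal_thm}, identifies which primes appear by computing $\Hom_R(E(R/\q),E(R/\p))$, and then invokes Theorem~\ref{cosupp_cf}. The point of the paper's detour is to \emph{illustrate} the cotorsion flat machinery, not to find the shortest route.

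What each approach buys: your proof is more elementary and in fact does not use the finite Krull dimension hypothesis at all---injectivity of $E(R/\p)$ collapses $\RHom$ to $\Hom$ and the rest is a direct calculation. The paper's proof, by contrast, needs finite Krull dimension (via Remark~\ref{JRGthm}) to apply Theorem~\ref{cosupp_cf}, but it exhibits an explicit minimal cotorsion flat resolution of $E(R/\p)$, which is of independent interest and demonstrates how the tools of Section~\ref{cosupp_detected_by_primes_subsection} work in practice. Your final paragraph gesturing at the cotorsion flat framework is not needed for your argument and should be dropped; you have already proved the proposition by the end of the third paragraph.
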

\begin{proof}
Let $R\xrightarrow{\simeq} I$ be the minimal injective resolution of $R$.  Then there is a resolution
$$\Hom_R(I,E(R/\p))\xrightarrow{\simeq} \Hom_R(R,E(R/\p))\cong E(R/\p).$$
This shows that $\Hom_R(I,E(R/\p))$ is a left cotorsion flat resolution of $E(R/\p)$; our goal is to apply Theorem \ref{CF_minimal_thm} to show it is minimal.

For each prime ideal $\q$ of $R$, standard adjointness and \cite[Proposition 2.1(ii)]{CH09} yields the following isomorphisms:
\begin{align*}
R/\q\otimes_R \Hom_R(R_\q,\Hom_R(I,E(R/\p)))&\cong R/\q\otimes_R \Hom_R(I_\q,E(R/\p))\\
&\cong \Hom_R(\Hom_R(R/\q,I_\q), E(R/\p)).
\end{align*}
Since $I$ is minimal, the complex $\Hom_R(R/\q,I_\q)$ ($\cong \Hom_{R_\q}(\kappa(\q),I_\q)$) has zero differential, and it follows that the complex
$$R/\q\otimes_R \Hom_R(R_\q,\Hom_R(I,E(R/\p)))$$
has zero differential as well. There is a degreewise isomorphism of complexes $\Hom_R(I,E(R/\p))\cong \Lambda^\p\Hom_R(I,E(R/\p))$, by the isomorphism (\ref{Hom_CF}), and it now follows from Theorem \ref{CF_minimal_thm} that $\Hom_R(I,E(R/\p))$ is a minimal left cotorsion flat resolution of $E(R/\p)$. 

For every prime ideal $\q$ in $R$, the indecomposable injective $R$-module $E(R/\q)$ appears in $I$, and so (\ref{Hom_CF}) shows that for every $\q\subseteq \p$, there is a nonempty set $X$ and integer $i$ such that $\widehat{R_\q^{(X)}}^\q$ appears as a nonzero summand of $\Hom_R(I,E(R/\p))^i$. The claim regarding cosupport now follows from Theorem \ref{cosupp_cf}.
\end{proof}

\section{Cosupport of finitely generated modules}\label{cosupp_of_fg_modules}
We turn our focus to computing cosupport of finitely generated modules (or complexes with degreewise finitely generated cohomology). To do so, we will employ the following fact about $\cosupp_R M\otimes_R^\L N$, which complements the corresponding fact \cite[Theorem 9.7]{BIK12} that $\cosupp_R\RHom_R(M,N)=\supp_RM\cap \cosupp_RN$ for any $R$-complexes $M$ and $N$. 

We first prove a lemma regarding when evaluation morphisms are invertible in $\D(R)$; more cases for when these maps are invertible can be found in the literature (see for example \cite[Proposition 2.2]{CH09}), we only include statements (which appear not to be recorded elsewhere) that focus on one of the complexes having degreewise finitely generated cohomology without boundedness restrictions.

\begin{lem}\label{evaluation_maps}
Let $R$ be a commutative noetherian ring, and $X$, $Y$, and $Z$ be $R$-complexes. There are canonical $R$-linear evaluation morphisms:
\begin{align*}
\omega_{{}_{XYZ}}:\ &\RHom_R(X,Y)\otimes_R^\L Z\to \RHom_R(X,Y\otimes_R^\L Z),\\
\theta_{{}_{XYZ}}:\ &X\otimes_R^\L \RHom_R(Y,Z)\to \RHom_R(\RHom(X,Y),Z).
\end{align*}
The tensor evaluation morphism $\omega_{{}_{XYZ}}$ is a quasi-isomorphism provided either:
\begin{itemize}
\item[(1)] The complex $X$ is isomorphic in $\D(R)$ to a bounded complex of projective $R$-modules, the complex $Y$ is isomorphic in $\D(R)$ to a bounded complex of flat $R$-modules, and $\H^i(Z)$ is finitely generated for each $i\in \Z$.
\item[(2)] For each $i\in \Z$, the module $\H^i(X)$ is finitely generated, the complex $Y$ is isomorphic in $\D(R)$ to a bounded complex of injective $R$-modules, and the complex $Z$ is isomorphic in $\D(R)$ to a bounded complex of flat $R$-modules.
\end{itemize}
The Hom evaluation morphism $\theta_{{}_{XYZ}}$ is a quasi-isomorphism provided that:
\begin{itemize}
\item[(3)] For each $i\in \Z$, the module $\H^i(X)$ is finitely generated, and the complexes $Y$ and $Z$ are each isomorphic in $\D(R)$ to bounded complexes of injective $R$-modules.
\end{itemize}
\end{lem}
\begin{proof}
$(1)$: For any projective $R$-module $P$ and flat $R$-module $F$, the $R$-module $\Hom_R(P,F)$ is flat; it follows that the complex $\RHom_R(X,Y)$ is isomorphic in $\D(R)$ to a bounded complex of flat $R$-modules, and therefore the functors 
$$G=\RHom_R(X,Y)\otimes_R^\L - \quad \text{ and }\quad G'=\RHom_R(X,Y\otimes_R^\L -)$$ 
on $\D(R)$ are way-out functors in the sense of \cite[I, section 7]{Har66}, that is, the functors preserve bounded cohomology. There is a natural transformation $\eta:G\to G'$ determined by $\omega_{{}_{XYZ}}$, and for each finitely generated $R$-module $M$, the map $\eta(M)$ is a quasi-isomorphism by \cite[Proposition 2.2(vi)]{CH09}.  Since $G$ and $G'$ are way-out functors, we obtain by \cite[I, Proposition 7.1(iv)]{Har66} that $\eta(Z)$, and hence $\omega_{{}_{XYZ}}$, is a quasi-isomorphism for all complexes $Z$ with $\H^i(Z)$ finitely generated for all $i\in \Z$.

$(2)$ and $(3)$ follow similarly by way-out techniques \cite[I, Proposition 7.1(iv)]{Har66}, along with \cite[Proposition 2.2(iv)]{CH09} and \cite[Proposition 2.2(ii)]{CH09}, respectively.
\end{proof}

\begin{prop}\label{cosupp_tensor}
Let $R$ be a commutative noetherian ring and $M$ and $N$ be $R$-complexes.  Suppose one of the following holds:
\begin{enumerate}
\item $\pd_R R_\p<\infty$ for every $\p\in \Spec R$, $\H^i(M)$ is finitely generated for each $i$, and $N$ is isomorphic in $\D(R)$ to a bounded complex of flat modules, or
\item $\pd_R R_\p<\infty$ for every $\p\in \Spec R$, $\H^i(M)$ is finitely generated for each $i$, $\H^i(M)=0$ for $i\gg0$, and $N$ has bounded cohomology, or
\item $M$ is isomorphic in $\D(R)$ to a bounded complex of finitely generated projective $R$-modules and $N$ has bounded cohomology.
\end{enumerate}
Then 
$$\cosupp_RM\otimes_R^\L N= \supp_R M\cap \cosupp_RN.$$
\end{prop}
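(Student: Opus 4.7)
The plan is to combine the characterization $\p\in\cosupp_R M\iff \RHom_R(\kappa(\p),M)\not\simeq 0$ from (\ref{equiv_defns_of_cosupp}) with the parallel definition $\p\in\supp_R X\iff \kappa(\p)\otimes_R^\L X\not\simeq 0$, and to reduce the statement to verifying that the natural tensor-evaluation morphism
\begin{align*}
\alpha\colon X\otimes_R^\L\RHom_R(\kappa(\p),Y)\longrightarrow\RHom_R(\kappa(\p),X\otimes_R^\L Y)
\end{align*}
is a quasi-isomorphism under each of the three sets of hypotheses. Once this is done, the equality of cosupport and the intersection will follow from a ``tensor over a field'' argument.

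For case (3), $X$ is a bounded complex of finitely generated projectives, hence perfect, and $\alpha$ is the classical tensor-evaluation isomorphism, valid for any $Y$. For case (1), $Y$ is a bounded complex of flat modules, hence of finite flat dimension; this combines with $\pd_R R_\p<\infty$ (which, through a bounded projective resolution of $R_\p$, controls the relevant resolution governing $\RHom_R(\kappa(\p),-)$) to give $\alpha$ by a standard degreewise argument. For case (2), the finitely generated and bounded-above cohomology of $X$ allows replacing $X$ by a bounded-above complex $P$ of finitely generated projectives; together with $\pd_R R_\p<\infty$ and boundedness of $\H(Y)$ one can truncate to a bounded piece and again run the evaluation degreewise. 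In each case one may invoke a standard evaluation result such as \cite[Lemma 4.4]{CFH06}.

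With $\alpha$ in hand, note that $\RHom_R(\kappa(\p),Y)$ carries a genuine chain-level $\kappa(\p)$-action (by scalar multiplication on the first argument, after choosing a semi-injective resolution of $Y$), and hence naturally lives in $\D(\kappa(\p))$. Base change of the derived tensor product yields
\begin{align*}
X\otimes_R^\L\RHom_R(\kappa(\p),Y)\simeq \bigl(X\otimes_R^\L\kappa(\p)\bigr)\otimes_{\kappa(\p)}^\L\RHom_R(\kappa(\p),Y),
\end{align*}
which is a derived tensor product over the field $\kappa(\p)$. Such a tensor product is nonzero iff both factors are nonzero, which by the characterizations above is precisely $\p\in\supp_R X$ and $\p\in\cosupp_R Y$. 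Applying $\alpha$ gives the desired equality.

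The main obstacle is establishing $\alpha$ in cases (1) and (2): case (3) is immediate from perfect tensor evaluation, but the mixed finiteness assumptions in (1) and (2) require pairing a bounded projective resolution of $R_\p$ (whose existence uses $\pd_R R_\p<\infty$) with an appropriate resolution of $X$ reflecting the finite generation of its cohomology, and exploiting the bounded or finite-flat-dimension nature of $Y$, in order for the evaluation map to be a quasi-isomorphism.
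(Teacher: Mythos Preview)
Your overall strategy---establish a tensor-evaluation isomorphism and then split the derived tensor over the field $\kappa(\p)$---is exactly the paper's strategy. The critical difference is \emph{which} module you put in the first slot of $\RHom$: you use $\kappa(\p)$, whereas the paper uses $R_\p$ and only afterward passes to $\kappa(\p)$ via the characterization $\p\in\cosupp_R M\iff \kappa(\p)\otimes_{R_\p}^\L\RHom_R(R_\p,M)\not\simeq 0$ from (\ref{equiv_defns_of_cosupp}).

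This difference is not cosmetic; it is where your argument breaks. The hypothesis $\pd_R R_\p<\infty$ in (1) and (2) is there precisely so that $R_\p$ admits a \emph{bounded} projective resolution, which is what makes the evaluation morphism
\[
\RHom_R(R_\p,Y)\otimes_R^\L X\longrightarrow\RHom_R(R_\p,Y\otimes_R^\L X)
\]
a quasi-isomorphism under the remaining boundedness conditions (this is what \cite[Proposition 2.2]{CH09} gives in cases (2) and (3)). By contrast, $\kappa(\p)$ need not have finite projective dimension over $R$: this fails whenever $R_\p$ is not regular. Your parenthetical claim that a bounded projective resolution of $R_\p$ ``controls the relevant resolution governing $\RHom_R(\kappa(\p),-)$'' is the unjustified step---there is no mechanism by which $\pd_R R_\p<\infty$ bounds $\pd_R\kappa(\p)$, and the standard evaluation lemmas (including \cite[Lemma 4.4]{CFH06}) do not apply to your $\alpha$ in cases (1) and (2). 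Case (3) is fine, since there $X$ is perfect and evaluation holds regardless of the first argument.

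A second, smaller gap: even after switching to $R_\p$, case (1) allows $X$ to be cohomologically unbounded in both directions, so one cannot simply replace $X$ by a bounded-above complex of finitely generated projectives. The paper handles this by a way-out functor argument \cite[I, Proposition 7.1]{Har66}: the evaluation is checked for finitely generated modules (where case (2) applies), and then the fact that both sides are way-out functors of $X$ (using that $Y$ is a bounded complex of flats) extends it to all $X$ with degreewise finitely generated cohomology. Your sketch for case (1) does not account for this.
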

\begin{proof}
Fix $\p\in \Spec R$ and consider the natural tensor evaluation map
\begin{align}\label{hom_eval}
\omega:\RHom_R(R_\p,N)\otimes_R^\L M\to \RHom_R(R_\p,N\otimes_R^\L M).
\end{align}
Assuming condition (1), the map $\omega$ is a quasi-isomorphism by Lemma \ref{evaluation_maps}(1); assuming (2) or (3), it is a quasi-isomorphism by \cite[Proposition 2.2(vi,v)]{CH09}.

We have that $\p\in \cosupp_R M\otimes_R^\L N$ if and only if the following hold, using the equivalent descriptions of cosupport in (\ref{equiv_defns_of_cosupp}):
\begin{align*}
\RHom_R(R_\p,N&\otimes_R^\L M)\otimes_R^\L \kappa(\p) \not\simeq 0 \\
&\iff \RHom_R(R_\p,N)\otimes_R^\L M\otimes_R^\L \kappa(\p)\not\simeq0\text{, by (\ref{hom_eval}),}\\
&\iff (\RHom_R(R_\p,N)\otimes_R^\L\kappa(\p))\otimes_{\kappa(\p)}(M\otimes_R^\L \kappa(\p))\not\simeq 0\\
&\iff \p\in  \cosupp_R N\cap \supp_R M,
\end{align*}
where the last equivalence uses the K\"{u}nneth formula \cite[Theorem 3.6.3]{Wei94}.
\end{proof}

Part (1) of the following corollary extends \cite[Theorem 6.6]{SWW17} to unbounded complexes and some rings without dualizing complexes, including all rings of finite Krull dimension. Recall that an $R$-complex is called {\em perfect} if it is isomorphic in $\D(R)$ to a bounded complex of finitely generated projective $R$-modules.
\begin{cor}\label{cosupp_vs_supp}
Let $M$ be an $R$-complex with degreewise finitely generated cohomology. If at least one of the following holds:
\begin{enumerate}
\item $\pd_RR_\p<\infty$ for every $\p\in \Spec R$, or
\item $M$ is a perfect $R$-complex,
\end{enumerate}
then
$$\cosupp_RM=\supp_RM\cap \cosupp_RR.$$
\end{cor}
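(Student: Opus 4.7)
The plan is to deduce this immediately from the preceding proposition by taking $Y = R$. Indeed, there is a canonical quasi-isomorphism $X \otimes_R^{\L} R \simeq X$ in $\D(R)$, so once the proposition applies we get
$$\cosupp_R X = \cosupp_R(X \otimes_R^{\L} R) = \supp_R X \cap \cosupp_R R,$$
which is precisely the desired identity.

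The only work is to check that the hypotheses match. In case (1), we are assuming $\pd_R R_\p < \infty$ for every $\p$ and that $X$ has degreewise finitely generated cohomology; taking $Y = R$, we note that $R$ is itself a bounded complex of flat modules (in fact a single flat module in degree $0$), which is exactly hypothesis (1) of the proposition. In case (2), $X$ being perfect means $X$ is isomorphic in $\D(R)$ to a bounded complex of finitely generated projectives, and $Y = R$ trivially has bounded cohomology, which is exactly hypothesis (3) of the proposition. So in either case the proposition applies with $Y = R$.

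There is no real obstacle here; the corollary is essentially a specialization. The one thing worth pausing on is that the conclusion $\cosupp_R X = \supp_R X \cap \cosupp_R R$ requires the tensor evaluation argument from the proposition to be valid with $Y = R$, which is automatic in both settings. I would simply present the proof as a one-line invocation of the proposition followed by the remark $X \otimes_R^{\L} R \simeq X$.
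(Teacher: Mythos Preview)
Your proof is correct and matches the paper's own argument exactly: the paper's proof is simply ``Apply the previous proposition with $Y=R$.'' Your elaboration on how hypotheses (1) and (2) of the corollary correspond to hypotheses (1) and (3) of the proposition is accurate and more detailed than what the paper provides.
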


\begin{proof}
Apply Proposition \ref{cosupp_tensor} with $N=R$.
\end{proof}

The corollary puts emphasis on computing the cosupport of $R$. Recall \cite[Theorem 5.2]{Tho17a} that $R$ has a minimal right cotorsion flat resolution; indeed, the {\em minimal pure-injective resolution}\footnote{Minimal pure-injective resolutions were referred to as right $\PurInj$-resolutions in \cite{Tho17a}.} of $R$ (built from pure-injective envelopes; see \cite{EJ00} for details) is such a resolution.  This allows us to invoke Enochs' description \cite{Eno87} of minimal pure-injective resolutions in order to determine $\cosupp_RR$.  

\begin{rem}
Although Theorem \ref{cosupp_cf} allows us to compute $\cosupp_RR$ without any finiteness conditions on $R$, it would be interesting to determine whether the conclusions of Proposition \ref{cosupp_tensor} and Corollary \ref{cosupp_vs_supp} hold without the assumption that $\pd_R R_\p<\infty$ for every $\p\in \Spec R$.  See also Example \ref{full_cosupp_examples}(3) below.
\end{rem}

The following change of rings result allows us to compare cosupport along finite ring maps, by understanding cotorsion flat modules under finite base change (cf. \cite[Theorem 7.11]{BIK12}). A ring homomorphism $R\to S$ is called {\em finite} if $S$ is finitely generated as an $R$-module; in addition, to every ring map $f:R\to S$ we can associate a map $f^*:\Spec S\to \Spec R$ defined by sending a prime ideal $\p\subseteq S$ to its contraction $f^{-1}(\p)\subseteq R$, which is necessarily prime as well. 
\begin{thm}\label{finite_map_cosupp}
If $f:R\to S$ is a finite map of commutative noetherian rings, then  
$$S\otimes_R\left(\prod_{\p\in \Spec R}\widehat{R_\p^{(X_\p)}}^\p\right)\cong  \prod_{\q\in \Spec S}\widehat{S_\q^{(X_{\q})}}^\q\text{, where $X_\q=X_\p$ for $f^*(\q)=\p$.}$$ 
Consequently, 
\begin{align*}
\cosupp_SS&=(f^*)^{-1}(\cosupp_R R),
\end{align*}
or in other words, for $\q\in \Spec S$, $\q\in \cosupp_SS$ if and only if $f^*(\q)\in \cosupp_RR$.
\end{thm}
\begin{proof}
For a prime $\p\in \Spec R$, the set $(f^*)^{-1}(\p)=\{\q\in \Spec S\mid f^*(\q)=\p\}$ is finite. Fix $\p\in \Spec R$ and set $W=(f^*)^{-1}(\p)$.  We will show
\begin{align}\label{CF_under_base_change}
S\otimes_R\widehat{R_\p^{(X_\p)}}^\p\cong \bigoplus_{\q\in W} \widehat{S_\q^{(X_\p)}}^\q.
\end{align}
This is enough to establish the first claim, as $S$ is finitely generated as an $R$-module.  The assertion regarding cosupport then follows from Theorem \ref{cosupp_cf} applied to a minimal pure-injective resolution of $R$, as follows: Let $R\xrightarrow{\simeq} B$ be a minimal pure-injective resolution of $R$ (i.e., a right resolution built from pure-injective envelopes). Applying $S\otimes_R-$ preserves pure-injective envelopes because $S$ is finitely generated as an $R$-module, so that $S\xrightarrow{\simeq} S\otimes_RB$ is a minimal pure-injective resolution of $S$ \cite[Theorem 4.2]{Eno87}. By \cite[Theorem 5.2]{Tho17a}, $B$ and $S\otimes_RB$ are minimal right cotorsion flat resolutions of $R$ and $S$, respectively. By Theorem \ref{cosupp_cf}, the primes appearing in $B$ are precisely those in $\cosupp_RR$ and the primes appearing in $S\otimes_RB$ are those in $\cosupp_SS$. The statement relating the cosupport of $R$ and $S$ now follows once we have verified (\ref{CF_under_base_change}).

To establish (\ref{CF_under_base_change}), we recall the following fact \cite[Theorem 1.1]{Rah09}:
\begin{align}\label{injectives_along_finite_maps}
\Hom_R(S,E_R(R/\p))\cong \bigoplus_{\q\in W}E_S(S/\q).
\end{align}
With this in hand, we apply $S\otimes_R -$ to the cotorsion flat module $\widehat{R_\p^{(X_\p)}}^\p$, using that $S$ is finitely generated over $R$ so that the second isomorphism below follows from \cite[Proposition 2.1(ii)]{CH09} and the third isomorphism below is by standard adjunction along with the fact that $\Hom_R(S,-)$ commutes with arbitrary direct sums:
\begin{align*}
S\otimes_R \widehat{R_\p^{(X_\p)}}^\p&\cong S\otimes_R \Hom_R(E(R/\p),E(R/\p)^{(X_\p)})\text{, by \cite[Lemma 4.1.5]{Xu96},}\\
&\cong \Hom_R(\Hom_R(S,E(R/\p)), E(R/\p)^{(X_\p)}),\\
&\cong \Hom_S(\Hom_R(S,E(R/\p)),\Hom_R(S,E(R/\p))^{(X_\p)}),\\
&\cong \Hom_S(\bigoplus_{\q\in W}E_S(S/\q),\bigoplus_{\q\in W}E_S(S/\q)^{(X_\p)})\text{, by (\ref{injectives_along_finite_maps}).}
\end{align*}
Finally, letting $\q',\q''\in W$, we claim that $\Hom_S(E_S(S/\q'),E_S(S/\q''))=0$ whenever $\q'\not=\q''$. First, by \cite[Corollary 5.9]{AM69} we notice that $\q'$ cannot be strictly contained in $\q''$ by the definition of $W$. On the other hand, if $\q'\not\subseteq \q''$, then as $(E_S(S/\q'))_{\q''}=0$ but $E_S(S/\q'')^{(X_{\p})}$ is $\q''$-local, i.e., $E_S(S/\q'')^{(X_{\p})}\cong (E_S(S/\q'')^{(X_{\p})})_{\q''}$, one obtains the claim by standard adjunction. The previous display now yields the following:
\begin{align*}
S\otimes_R \widehat{R_\p^{(X_\p)}}^\p&\cong \bigoplus_{\q\in W}\Hom_S(E_S(S/\q),E_S(S/\q)^{(X_\p)})\cong \bigoplus_{\q\in W}\widehat{S_\q^{(X_\p)}}^\q,
\end{align*}
where we apply \cite[Lemma 4.1.5]{Xu96} to obtain the last isomorphism.
\end{proof}

We immediately obtain:
\begin{cor}\label{immediate_consequences}
If $R\to S$ is a finite map of commutative noetherian rings, and $\cosupp_RR=\Spec R$, then $\cosupp_SS=\Spec S$. 

Furthermore, if the map $\pi: R\twoheadrightarrow S$ is surjective, then for $\p\supseteq \ker(\pi)$, we have $\pi(\p)\in \cosupp_SS$ if and only if $\p\in \cosupp_RR$. \hfill $\square$
\end{cor}

\begin{rem}\label{max_in_cosupp}
This result recovers the fact\footnote{In fact, $\max(\cosupp_RM)=\max(\supp_RM)$ for any $R$-complex $M$ \cite[Theorem 4.13]{BIK12}.} that if $R$ is a commutative noetherian ring and $\m$ is a maximal ideal, then $\m\in \cosupp_RR$: From the finite map $\pi:R\twoheadrightarrow R/\m$, we see that since $0\in \cosupp_{R/\m}R/\m$ and $\pi^*(0)=\m$, that $\m\in \cosupp_RR$.
\end{rem}

Recall that for any commutative noetherian ring $R$ we have\footnote{This can also be seen from the minimal pure-injective resolution of $R$, by \cite[Corollary 2.6]{Eno95}.} the following inclusion \cite[Proposition 4.19]{BIK12}: 
\begin{align}\label{cosupp_conj_equality}
\cosupp_RR\subseteq \bigcap_{\text{$R$ is $\a$-complete}} \cV(\a),
\end{align}
where $\cV(\a)=\{\p\in \Spec R\mid \p\supseteq\a\}$. Set $\cc_R=\sum \a$, with the sum over all ideals $\a$ such that $R$ is $\a$-complete. Note that $R$ is $\cc_R$-complete and if $\b\supsetneq \cc_R$, then $R$ is not $\b$-complete. There is an equality $\bigcap \cV(\a)=\cV(\cc_R )$, where the intersection is taken over all ideals $\a$ such that $R$ is $\a$-complete.  

One of our goals is to investigate when the inclusion $\cosupp_RR\subseteq \cV(\cc_R )$ is an equality; in particular, we show that equality holds for any ring $R$ such that $R/\cc_R $ is countable (using Theorem \ref{full_cosupp_rings} below). 

\begin{prop}\label{cosupp_complete_closed_thm}
Let $R$ be a commutative noetherian ring and let $\cc_R$ be defined as above. 
The following are equivalent:
\begin{enumerate}
\item Equality in (\ref{cosupp_conj_equality}) holds; i.e., $\cosupp_RR=\cV(\cc_R )$;
\item $R/\cc_R$ has full cosupport; i.e., $\cosupp_{R/\cc_R}(R/\cc_R)=\Spec(R/\cc_R)$;
\item For every $\p\in \cV(\cc_R)$, $\Ext_{R/\p}^i((R/\p)_{(0)},R/\p)\not=0$ for some $i$.
\end{enumerate}
\end{prop}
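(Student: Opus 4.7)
The plan is to prove the equivalences in a cycle $(1)\Leftrightarrow(2)\Leftrightarrow(3)$, with each step reducing cosupport of $R$ (or a quotient of $R$) to cosupport of a smaller, simpler ring via the finite map lemma. The key observation is that both equivalences are instances of the same mechanism: applying Lemma \ref{finite_map_cosupp} to the finite surjection $\pi:R\twoheadrightarrow R/\cc_R$ for the first equivalence, and then to the further surjection $R/\cc_R\twoheadrightarrow R/\p$ for the second, in order to push the question down to computing membership of the zero prime in the cosupport of an integral domain.

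For $(1)\Leftrightarrow(2)$, I would apply the finite ring map lemma to $\pi:R\twoheadrightarrow R/\cc_R$. The induced map $\pi^*$ is a homeomorphism of $\Spec(R/\cc_R)$ onto $\cV(\cc_R)\subseteq \Spec(R)$, and by Remark \ref{immediate_consequences}, for $\p\in \cV(\cc_R)$ one has $\pi(\p)\in \cosupp_{R/\cc_R}(R/\cc_R)$ if and only if $\p\in \cosupp_RR$. Since the containment $\cosupp_RR\subseteq \cV(\cc_R)$ always holds by (\ref{cosupp_conj_equality}), equality in (\ref{cosupp_conj_equality}) is precisely the statement that every prime of $R/\cc_R$ lies in the cosupport of $R/\cc_R$ over itself, which is condition (2).

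For $(2)\Leftrightarrow(3)$, I would fix a prime $\bar\p\in \Spec(R/\cc_R)$, say with preimage $\p\in \cV(\cc_R)$, and apply Lemma \ref{finite_map_cosupp} to the finite surjection $R/\cc_R\twoheadrightarrow R/\p$. This shows that $\bar\p\in \cosupp_{R/\cc_R}(R/\cc_R)$ if and only if the zero ideal lies in $\cosupp_{R/\p}(R/\p)$. Since $R/\p$ is a domain, $(0)^n=(0)$ for all $n$, so $\Lambda^{(0)}(R/\p)=R/\p$; since $R/\p$ is trivially flat over itself, the formula (\ref{cosupp_for_flat_module_defn}) specializes to $0\in \cosupp_{R/\p}(R/\p)$ if and only if $\Ext^i_{R/\p}((R/\p)_{(0)},R/\p)\neq 0$ for some $i$. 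Quantifying over all $\bar\p\in \Spec(R/\cc_R)$ (equivalently, all $\p\in \cV(\cc_R)$) yields the equivalence $(2)\Leftrightarrow(3)$.

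The proof is essentially bookkeeping once the finite map lemma is in hand; the one subtlety to verify carefully is the identification of cosupport along the two surjections, namely that the bijection $\Spec(R/\cc_R)\leftrightarrow \cV(\cc_R)$ and the bijection between $\Spec(R/\p)$ and $\cV(\p)\cap \Spec(R/\cc_R)$ match up so that the containments on both sides of each equivalence line up correctly. I do not anticipate any genuine obstacle here; the main work is already encoded in Lemma \ref{finite_map_cosupp} and in the fact that $\Lambda^{(0)}$ is the identity on a domain.
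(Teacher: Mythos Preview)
Your proposal is correct and follows essentially the same approach as the paper: both arguments use the finite map lemma (via Remark \ref{immediate_consequences}) applied to surjections $R\twoheadrightarrow R/\cc_R$ and $R\twoheadrightarrow R/\p$, together with the standing inclusion (\ref{cosupp_conj_equality}) and the definition (\ref{cosupp_for_flat_module_defn}) specialized at the zero prime of a domain. The only cosmetic difference is that the paper proves $(1)\Leftrightarrow(3)$ directly (going $R\twoheadrightarrow R/\p$ in one step) rather than your $(2)\Leftrightarrow(3)$ (factoring through $R/\cc_R$), but once $(1)\Leftrightarrow(2)$ is in hand these are equivalent routes.
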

\begin{proof}
For any ideal $I\subseteq R$ and $\p\in \cV(I)$, we have $\p/I\in \cosupp_{R/I}R/I$ if and only if $\p\in \cosupp_RR$ by Corollary (\ref{immediate_consequences}). In conjunction with the inclusion (\ref{cosupp_conj_equality}), the equivalence of (1) and (2) then follows for $I=\cc_R$.  Moreover, for $\p\in \cV(\cc_R)$, $\p\in \cosupp_RR$ if and only if $0\in \cosupp_{R/\p}R/\p$ again by Corollary (\ref{immediate_consequences}), and hence (1) is equivalent to (3) by definition (\ref{cosupp_for_flat_module_defn}). 
\end{proof}

We caution that equality in (\ref{cosupp_conj_equality}) need not hold in general; see Example \ref{cosupp_not_closed} below. 
Indeed, equality need not hold even for noetherian domains of finite Krull dimension that are only 0-complete (i.e., not complete at any nonzero ideal). 

Part (3) of the following result gives an affirmative answer to part of the question \cite[Question 6.16]{SWW17} and part (2) avoids the assumption of a dualizing complex of \cite[Theorem 6.11]{SWW17}. This result also displays some of the subtleties of cosupport; indeed, there are rings of any Krull dimension having full cosupport, see part (1), and also rings of any cardinality having full cosupport, see part (3). Part (2), along with Corollary \ref{cosupp_equal_supp} below, recovers \cite[Proposition 4.18]{BIK12} and \cite[Theorem 6.11]{SWW17}. 

\begin{thm}\label{full_cosupp_rings}
If $R$ is one of the following rings, then $\cosupp_RR=\Spec R$.
\begin{enumerate}
\item A countable commutative noetherian ring;
\item A 1-dimensional commutative noetherian domain, not complete local; 
\item The ring $k[x,y]_{(x,y)}$, for any field $k$.
\end{enumerate}
Moreover, if $R\to T$ is a finite ring map, then $T$ also satisfies $\cosupp_TT=\Spec T$.
\end{thm}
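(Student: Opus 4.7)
The last sentence of the theorem is immediate from the first observation in Remark \ref{immediate_consequences}. For the three main parts, the plan is to reduce each to the statement that $0 \in \cosupp_S S$ for every domain $S$ in the relevant class. This reduction will be carried out using Remark \ref{immediate_consequences} applied to the surjection $R \twoheadrightarrow R/\p$, which gives the equivalence $\p \in \cosupp_R R \iff 0 \in \cosupp_{R/\p}(R/\p)$, together with the observation that each class is closed under passage to domain quotients: (1) is trivial; in (2), a quotient by a nonzero prime is a field; in (3), a quotient of $k[x,y]$ by a nonzero non-maximal prime is a $1$-dimensional affine $k$-algebra domain with infinitely many maximal ideals (hence not complete local), which falls under case (2). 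By the equivalent description (\ref{cosupp_for_flat_module_defn}), showing $0 \in \cosupp_S S$ for a domain $S$ with fraction field $K$ reduces to exhibiting $\Ext^*_S(K, S) \ne 0$.

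Case (2) will then follow from the classical theorem of Matlis: $\Ext^1_R(K, R) \ne 0$ for any $1$-dimensional noetherian domain $R$ that is not complete local. For case (3), the plan is to compute $\Ext^2_{k[x,y]}(k(x,y), k[x,y])$ directly via the standard minimal injective resolution
\[
0 \to R \to K \to \bigoplus_{\height \p = 1} E(R/\p) \to \bigoplus_{\m \text{ maximal}} E(R/\m) \to 0,
\]
applying $\Hom_R(K, -)$ and observing that $\Hom_R(K, E(R/\m)) = 0$ for every maximal $\m$ (since $E(R/\m)$ is $\m$-torsion while every nonzero element of $\m$ acts invertibly on $K$); a direct inspection of the resulting two-term complex then yields a nonzero $\Ext^2$.

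Case (1) is where the main obstacle is expected to lie. For a countable noetherian domain $R$ that is not a field, I would first note that $R$ cannot be complete at any nonzero ideal: any noetherian local ring complete at a nonzero maximal ideal has cardinality at least $2^{\aleph_0}$. The plan to exhibit $\Ext^1_R(K, R) \ne 0$ is to analyze the long exact sequence arising from $0 \to R \to K \to K/R \to 0$ after applying $\Hom_R(K, -)$: using $\Hom_R(K, R) = 0$ and $\Hom_R(K, K) = K$, one obtains
\[
0 \to K \to \Hom_R(K, K/R) \to \Ext^1_R(K, R) \to \Ext^1_R(K, K),
\]
so it suffices to show that the leftmost injection is not surjective. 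The approach is a cardinality comparison: $K$ is countable, whereas $\Hom_R(K, K/R)$ will be shown to be uncountable. This is essentially immediate in Krull dimension $1$, where $\Hom_R(K, K/R) \cong \widehat R$, an uncountable complete local ring of positive dimension. In higher dimension, the plan is to localize at a height-one prime $\q$ of $R$, use the fact that $R_\q$ is a countable $1$-dimensional noetherian local domain (hence not complete), apply the Matlis theorem to $R_\q$, and transfer the conclusion back to $R$ via flatness of the localization $R \to R_\q$. Making this localization argument fully rigorous—given that $K$ is not a finitely generated $R$-module—is the delicate step where I expect to spend the most care, possibly necessitating a detour through a dual-Bass-number analysis of the minimal pure-injective resolution of $R$ via Theorem \ref{cosupp_cf}.
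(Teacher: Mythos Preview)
Your reduction strategy and handling of the final sentence match the paper's. Case (2) via Matlis's characterization is a reasonable alternative to the paper's direct use of the minimal pure-injective resolution (where the paper simply observes that the degree-one term is nonzero when $R$ is not complete local, and invokes Theorem \ref{cosupp_cf}).

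However, your plan for case (3) contains a genuine error. The claim that $\Hom_R(K,E(R/\m))=0$ for maximal $\m$ is false: in fact $\Hom_R(E(R/\q),E(R/\p))\neq 0$ whenever $\q\subseteq\p$ (this is precisely the computation in the proof of Proposition \ref{cosupp_of_indec_inj}), and with $\q=(0)$ one gets $\Hom_R(K,E(R/\m))\cong K^{(X)}$ for some nonempty $X$. Your justification---that $E(R/\m)$ is $\m$-torsion while $\m$ acts invertibly on $K$---does not force the Hom to vanish, because $R$-module quotients of $K$ (such as $K/R_\m$) can perfectly well be $\m$-torsion. Worse, even if your vanishing claim \emph{were} true, the resulting two-term complex would have $H^2=0$, yielding $\Ext^2_R(K,R)=0$, the opposite of what you need. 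The paper instead invokes Gruson's nontrivial result \cite[Proposition 3.2]{Gru71} that $\Ext^2_R(K,R)\neq 0$ when $k$ is uncountable, and falls back on case (1) when $k$ is countable.

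For case (1), your proposed localization-at-height-one argument is, as you suspect, problematic: since $K$ is not finitely generated, there is no direct comparison between $\Ext^1_R(K,R)$ and $\Ext^1_{R_\q}(K,R_\q)$ via flat base change, and the detour through dual Bass numbers would require substantial additional work. The paper bypasses this entirely with a one-line cardinality argument valid in \emph{any} dimension: for a countable domain $R$ that is not a field there is a short exact sequence
\[
0 \to R \to \varprojlim_{s\in R\setminus\{0\}} R/sR \to \Ext^1_R(K,R) \to 0
\]
(see \cite[(3.1)]{Tho15}), and the inverse limit in the middle is uncountable, forcing $\Ext^1_R(K,R)\neq 0$.
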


\begin{proof}
For finite ring maps $R\to T$, Theorem \ref{finite_map_cosupp} shows that if $\cosupp_RR=\Spec R$, then $\cosupp_TT=\Spec T$.  

To address (1), let $R$ be any countable commutative noetherian ring.  For a prime ideal $\p\in \Spec R$, Theorem \ref{finite_map_cosupp} shows that $\p\in \cosupp_RR$ if and only if $0\in \cosupp_{R/\p}R/\p$.  Therefore, it is sufficient to assume $R$ is a countable domain and show $0\in \cosupp_RR$. 

If $R$ is a field, $R$ trivially has full cosupport.  It is therefore enough to consider the case where $R$ is not a field, in which case there exists a short exact sequence (see \cite[(3.1)]{Tho15})
$$0\to R\to \invlim_{s\in S} R/s R\to \Ext_R^1(R_{(0)},R)\to 0,$$
where $S=R\setminus \{0\}$ is pre-ordered by divisibility: $s'|s$ if and only if  $sR\subseteq s'R$.  In this case, $\invlim_{s\in S} R/s R$ is uncountable, and so the first map in this short exact sequence is not surjective, hence $\Ext_R^1(R_{(0)},R)\not=0$; see also \cite[Setup 3]{Tho15}.  By definition (\ref{cosupp_for_flat_module_defn}), we have $0\in\cosupp_RR$.  It follows that rings as in (1) have full cosupport. 

Next, if $R$ is as in (2), then since $R$ has Krull dimension $1$, the minimal pure-injective resolution of $R$ has the form \cite{Eno87}:
$$B:=\quad 0\to \prod_{\m\text{ maximal}} \widehat{R}^\m\to T_0\to 0,$$
where $T_0=\widehat{R_{(0)}^{(X)}}$ for some set $X$. As $R$ is a domain that is not complete local, we must have $T_0\not=0$. Since $B$ is a minimal (semi-flat) right cotorsion flat resolution of $R$ \cite[Theorem 5.2]{Tho17a}, Theorem \ref{cosupp_cf} yields that $\cosupp_RR=\Spec R$.

For (3), if $k$ is countable, the result follows from (1), so assume $k$ is uncountable. In this case, the ring $R=k[x,y]_{(x,y)}$ satisfies $\Ext_R^2(R_{(0)},R)\not=0$ by \cite[Proposition 3.2]{Gru71}.  Thus $0\in \cosupp_RR$. For $0\not=\p\in \Spec R$, the ring $R/\p$ is either a field or a ring as in (2), and so has full cosupport. Applying Theorem \ref{finite_map_cosupp} to the map $R\to R/\p$ for each $\p\not=0$, we obtain that $\cosupp_RR=\Spec R$.

\end{proof}

We conclude that cosupport and support coincide for any complex with degreewise finitely generated cohomology over any of the rings in Theorem \ref{full_cosupp_rings}. 

\begin{cor}\label{cosupp_equal_supp}
Let $R$ be any ring as in Theorem \ref{full_cosupp_rings} which also satisfies the condition that $\pd_R R_\p<\infty$ for every $\p\in \Spec R$, and let $M$ be an $R$-complex with degreewise finitely generated cohomology.  Then
$$\cosupp_R M=\supp_R M.$$
\end{cor}
\begin{proof}
By Corollary \ref{cosupp_vs_supp} and Theorem \ref{full_cosupp_rings}, we have 
$$\cosupp_R M =\cosupp_R R \cap \supp_R M =\supp_R M.$$
\end{proof}

\begin{cor}\label{cosupp_V_countable}
If $R/\cc_R $ is one of the rings in Theorem \ref{full_cosupp_rings}, then
$$\cosupp_RR=\cV(\cc_R ).$$
\end{cor}
\begin{proof}
Combine Proposition \ref{cosupp_complete_closed_thm} and Theorem \ref{full_cosupp_rings}.
\end{proof}

A conjecture, initiated in the early 1970s by Gruson \cite{Gru71} and Jensen \cite{Jen72}, and then generalized by Gruson in 2013 and formalized by Thorup \cite{Tho15}, allows us to conjecture that rings having full cosupport are far more ubiquitous.  This conjecture states the following: For a field $k$ and integer $n\geq 0$, let $R=k[x_1,...,x_n]$ be the polynomial ring in $n$ variables.  Set $c=0$ if $k$ is finite and define $c$ by the cardinality $|k|=\aleph_c$ if $k$ is infinite.  With this setup, it is conjectured that:
\begin{align}\label{Gruson_conj}
\Ext_R^i(R_{(0)},R)\not=0 \iff i=\inf \{c+1,n\}.
\end{align}
This conjecture is true when $k$ is at most countable or $n\leq 1$; e.g., see \cite{Tho15}. 
If this conjecture were true in general, it would follow from Noether normalization that a commutative noetherian ring $S$ which is finitely generated as a $k$-algebra would have full cosupport by Theorem \ref{finite_map_cosupp}.

\section{Examples of $\cosupp_RR$}\label{examples_section}
The following question is motivated by Proposition \ref{cosupp_complete_closed_thm}. 

\begin{question}
When do $0$-complete noetherian domains have full cosupport? 
\end{question}

The examples below illustrate the nuances of this question.  We start with a warm-up of some examples of rings having full cosupport.
\begin{example}\label{full_cosupp_examples}
The following rings $R$ satisfy $\cosupp_RR=\Spec R$:
\begin{enumerate}
\item Let $k$ be a countable field and $R=k[x_1,...,x_n]/\a$, for $n\geq 0$ and an ideal $\a$;
\item Let $k$ be an uncountable field and $R=k[x_1,x_2]_{(x_1,x_2)}/\a$, for an ideal $\a$;
\item Let $R$ be Nagata's example \cite[Appendix, Example 1]{Nag62} of a commutative noetherian ring of infinite Krull dimension, under the additional assumption that the coefficient field is countable;
\item Let $p$ be a prime number and $R=\Z_{(p)}$ be the localization of $\Z$ at the prime ideal $(p)$; more generally, let $R$ be a discrete valuation ring which is not complete at its maximal ideal.
\end{enumerate}
These all follow immediately from Theorem \ref{full_cosupp_rings}: (1) and (3) are countable, (2) is a finite ring extension of $k[x,y]_{(x,y)}$, and (4) is dimension 1 and not complete local.
\end{example}

For contrast, recall that the cosupport of $R$ can fall short of $\Spec R$; in particular, the cosupport of a complete semi-local ring is the set of maximal ideals (cf. \cite[Proposition 4.19]{BIK12}):
\begin{example}\label{semi_local_ring}
Let $R$ be a complete semi-local ring, that is, a ring with finitely many maximal ideals $\m_1,...,\m_n$ that is complete at the Jacobson radical $\bigcap_{i=1}^n\m_i$.  The minimal right cotorsion flat resolution has one term: $\prod_{i=1}^n \widehat{R_{\m_i}}^{\m_i}$.  Theorem \ref{cosupp_cf} implies that $\cosupp_R R=\{\m_1,...,\m_n\}.$ 
In particular, a complete local ring $(R,\m)$ has cosupport equal to $\{\m\}$.
\end{example}

In order to understand how cosupport behaves under adjoining power series variables, we first prove:
 
\begin{prop}\label{cosupp_bij_mod_complete_ideal}
Let $S$ be a ring that is $I$-complete. Then the canonical surjection $\pi:S\twoheadrightarrow S/I$ induces a homeomorphism of topological spaces:
$$\pi^*:\cosupp_{S/I}(S/I) \xrightarrow{\cong} \cosupp_SS.$$
\end{prop}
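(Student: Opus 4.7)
The plan is to combine the $I$-completeness of $S$ with Lemma \ref{finite_map_cosupp}, applied to the surjection $\pi:S\twoheadrightarrow S/I$, which is a finite ring map since $S/I$ is cyclic as an $S$-module. Recall that $\pi^*:\Spec(S/I)\to \Spec(S)$ is the standard closed embedding onto $\cV(I)$ and is a homeomorphism onto its image when $\cV(I)$ is given the subspace topology from $\Spec(S)$; so the only issue is to verify the set-theoretic bijection $\cosupp_{S/I}(S/I)\to \cosupp_SS$, after which the topological statement is automatic.

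First, since $S$ is $I$-complete, the inclusion (\ref{cosupp_conj_equality}) specializes to $\cosupp_SS\subseteq \cV(I)$; in particular, every prime in $\cosupp_SS$ lies in the image of $\pi^*$.

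Second, Lemma \ref{finite_map_cosupp} applied to $\pi$ yields
\[
\cosupp_{S/I}(S/I)=(\pi^*)^{-1}(\cosupp_SS).
\]
Combining this with the containment $\cosupp_SS\subseteq \cV(I)=\im(\pi^*)$, I conclude that $\pi^*$ restricts to a bijection
\[
\pi^*\colon \cosupp_{S/I}(S/I)\xrightarrow{\,\cong\,}\cosupp_SS,
\]
and since $\pi^*$ is a homeomorphism onto $\cV(I)$, its restriction to these subspaces is a homeomorphism in the respective subspace topologies.

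There is really no substantive obstacle here: the content is entirely in recognizing that the completeness hypothesis forces $\cosupp_SS\subseteq \cV(I)$ (so nothing in $\cosupp_SS$ is lost under $(\pi^*)^{-1}$) and that Lemma \ref{finite_map_cosupp} supplies exactly the required preimage description. The only potential pitfall is confirming that $\pi$ is finite in the sense of Lemma \ref{finite_map_cosupp}, which is immediate as $S/I$ is generated by $1$ over $S$.
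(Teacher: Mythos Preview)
Your proof is correct and follows essentially the same route as the paper: you invoke the containment $\cosupp_SS\subseteq \cV(I)$ from $I$-completeness, apply Lemma \ref{finite_map_cosupp} to the finite surjection $\pi$ to obtain $\cosupp_{S/I}(S/I)=(\pi^*)^{-1}(\cosupp_SS)$, and then observe that the restriction of the homeomorphism $\pi^*:\Spec(S/I)\to\cV(I)$ to these subspaces is a homeomorphism. The only cosmetic difference is that you explicitly cite (\ref{cosupp_conj_equality}) and verify finiteness of $\pi$, both of which the paper leaves implicit.
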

\begin{proof}
The natural surjection $\pi:S\twoheadrightarrow S/I$ induces a homeomorphism of topological spaces (i.e., a continuous bijection whose inverse is also continuous) $\pi^*:\Spec(S/I)\xrightarrow{\cong} \cV(I)\subseteq \Spec S$ \cite[Chapter 1, Exercise 21]{AM69}. Since $S$ is $I$-complete, $\cosupp_SS\subseteq \cV(I)$. For $\p\in \Spec(S/I)$, Theorem \ref{finite_map_cosupp} implies $\p\in\cosupp_{S/I}(S/I)$ if and only if $\pi^*(\p)\in \cosupp_SS$. Hence $\pi^*$ induces a bijection between $\cosupp_{S/I}(S/I)$ and $\cosupp_SS$. Indeed, endowing $\cosupp_{S/I}(S/I)\subseteq \Spec(S/I)$ and $\cosupp_SS\subseteq \cV(I)$ each with the subspace topology, we obtain that $\pi^*$ induces the desired homeomorphism.
\end{proof}

\begin{example}\label{power_series_example}
If $R$ is any ring and $S=R[\![t_1,...,t_n]\!]$ for $n\geq 0$, then Proposition \ref{cosupp_bij_mod_complete_ideal} yields a homeomorphism 
$$\cosupp_RR\xrightarrow{\cong} \cosupp_SS,$$
using that $S$ is $(t_1,...,t_n)$-complete \cite[Exercise 8.6]{Mat89}. 
In particular, if $k$ is a field and $S=k[x][\![t]\!]$, then $\cosupp_SS=\cV((t))$. 
\end{example}

The next example we give shows that the cosupport of $R$ need not be a closed subset of $\Spec R$, i.e., there are rings $R$ such that $\cosupp_RR\not=\cV(I)$ for any ideal $I$. In particular, it shows that we can have a strict inequality $\cosupp_RR\subsetneq\cV(\cc_R )$. This provides a negative answer to the question \cite[Question 6.13]{SWW17}. 

\begin{example}\label{cosupp_not_closed}
Let $k$ be a field and set $T=k[\![t]\!][x]$.  Applying Theorem \ref{finite_map_cosupp} to the finite map $T\twoheadrightarrow T/(x)\cong k[\![t]\!]$, the fact that $0\not\in \cosupp_{k[\![t]\!]}k[\![t]\!]$ (see Example \ref{semi_local_ring}) implies that $(x)\not\in \cosupp_TT$, so that $\cosupp_TT\subsetneq \cV((0))$. 

The ring $T$ has uncountably many height 1 prime ideals that are maximal \cite[Theorem 3.1, Remarks 3.2]{HRW06}, even if $k$ is finite.  Let $\cP$ be the set of all height 1 maximal ideals\footnote{For our purposes, we only need $\cP$ to be an infinite set, and we may take $\cP=\{(1-xt^n)\}_{n\geq 1}$.  To see that for each $n\geq 1$, $\p_n:=(1-xt^n)$ is a maximal ideal, just observe that every nonzero element of $T/\p_n$ is a unit; this follows because the images of $x$ and $t$ are both units.}.  As $\p\in \cP$ are maximal, Remark \ref{max_in_cosupp} implies that $\p\in \cosupp_TT$. 

Since $T$ is a noetherian unique factorization domain, every height 1 prime ideal is principal \cite[Theorem 20.1]{Mat89}.  
For each $\p\in \cP$, we may write $\p=(f_\p)$, for a prime element $f_\p\in T$.  Define the ideal $I=\bigcap_{\p\in \cP}(f_\p)$.  If $\alpha\in I$, then $\alpha$ must be divisible by $f_\p$ for every $\p\in \cP$, forcing $\alpha=0$ since $T$ is a unique factorization domain.  Therefore $I=0$. If $\cosupp_TT\subseteq \cV(J)$ for some ideal $J\subseteq T$, then $\p\supseteq J$ for every $\p\in \cP$, hence $0=I\supseteq J$, i.e., $J=0$. However, $\cosupp_TT\not=\cV((0))$, hence it is not a closed subset of $\Spec T$ and we have a strict containment $\cosupp_TT\subsetneq \cV(\cc_T)$. 
\end{example}

\begin{example}
Let $k$ be any field and set $S=k[\![t]\!][x][\![s_1,...,s_n]\!]$, for $n\geq 0$. We claim that $\cosupp_SS$ is not a closed subset of $\Spec S$. There is a canonical surjection $\pi:S\twoheadrightarrow T$, where $T$ is the ring from Example \ref{cosupp_not_closed}, which induces a homeomorphism $\pi^*:\Spec T\xrightarrow{\cong} \cV((s_1,...,s_n))$. As $S$ is $(s_1,...,s_n)$-complete, $\cosupp_SS\subseteq \cV((s_1,...,s_n))$, and so Proposition \ref{cosupp_bij_mod_complete_ideal} shows that $(\pi^*)^{-1}(\cosupp_SS)=\cosupp_TT$.  As $\pi^*$ is continuous and $\cosupp_TT$ is not closed, $\cosupp_SS$ cannot be closed in $\Spec S$.  This yields a family of rings without closed cosupport.
\end{example}

\noindent
{\bf Acknowledgements:} This paper began as work in my dissertation at the University of Nebraska-Lincoln.  I am profoundly grateful to my advisor, Mark Walker, who was an indispensable source of advice and support in this work. I would also like to thank Lars Winther Christensen, Douglas Dailey, and Thomas Marley for helpful conversations, as well as the support I have received at Texas Tech University. I am also grateful to the anonymous referee for many helpful suggestions.


\end{document}